\documentclass{article}
\usepackage[left=3cm,right=3cm,top=2.5cm,bottom=2.5cm]{geometry}
\usepackage{algorithm}
\usepackage{algpseudocode}
\usepackage{amsmath} 
\usepackage{amsthm}
\usepackage{amsfonts}
\usepackage{amssymb}
\usepackage{bbm}
\usepackage{relsize}
\usepackage{units}
\usepackage{xcolor}
\usepackage[shortlabels, inline]{enumitem}

\usepackage{todonotes}

\usepackage{algorithm}
\usepackage{algorithmicx}
\usepackage{algpseudocode}

\usepackage{graphicx}
\graphicspath{{/home/andreas/Documents/FORSCHUNG/Forschung_github/DeepTVprivate/python/1d/test_boundedweights/denoising/layers_64-128-64_acts_relu-relu-relu_l2_0/ruleAR/learning_rate0.01/}}

\usepackage{subcaption} 
\usepackage{adjustbox} 

\usepackage{pgfplots}
\pgfplotsset{compat=newest} 

\usepackage{hyperref}
\usepackage[
    capitalise,
    noabbrev,
  ]{cleveref}
\usepackage{tikz}
\usetikzlibrary{arrows}

\usepackage{xparse}
\newtheorem{theorem}{Theorem}[section]
\newtheorem{example}[theorem]{Example}
\newtheorem{proposition}[theorem]{Proposition}
\newtheorem{remark}[theorem]{Remark}
\newtheorem{corollary}[theorem]{Corollary}

\newtheorem{lemma}[theorem]{Lemma}

\newcommand{\N}{\mathbb{N}}

\newcommand{\R}{\mathbb{R}}

\newcommand{\mcD}{\mathcal{D}}

\newcommand{\mcH}{\mathcal{H}}
\newcommand{\mcE}{\mathcal{E}}
\newcommand{\mcR}{\mathcal{R}}
\newcommand{\mcJ}{\mathcal{J}}
\newcommand{\mcB}{\mathcal{B}}
\newcommand{\mcA}{\mathcal{A}}
\newcommand{\mcL}{\mathcal{L}}
\newcommand{\mcU}{\mathcal{U}}
\newcommand{\mcV}{\mathcal{V}}

\newcommand{\Tr}{\mathrm{Tr}}
\newcommand{\mcHreg}[1]{\mcH_{\mathrm{sm},#1}}
\newcommand{\overmcHreg}[1]{\overline{\mcH}_{\mathrm{sm},#1}}

\newcommand{\dx}{\, \text{d}x}

\newcommand{\dz}{\, \text{d}z}
\newcommand{\ds}{\text{d}\Gamma}
\DeclareMathOperator*{\argmin}{arg\,min}

\def\FD{\mathrm{FD}}
\def\DR{\mathrm{DR}}
\def\reg{\mathrm{reg}}

\DeclareDocumentCommand{\InputData}{mO{}}{
  \def\DataPrefix{#2}
  \input{#1}}
  
 \usepackage{fancyhdr}
\title{Non-Uniqueness of Solutions in Neural Variational Methods}

\author{Andreas Langer\thanks{Corresponding author}\hspace{0.15cm}$^{, }$\thanks{Center for Mathematical Sciences, Lund University, Box 118, 221 00 Lund, Sweden, \url{andreas.langer@math.lth.se}}
}
\date{}

\begin{document}
\maketitle
\begin{abstract}
Recent work has shown that strong-form physics-informed neural networks (PINNs) based on pointwise enforcement of differential operators can be ill-posed due to the combination of sufficiently expressive neural network trial spaces with finitely many measurements.

In this work, we develop an abstract analytical framework that isolates this finite-information mechanism and extends its applicability beyond strong-form formulations. We apply the framework to three representative variational neural discretizations: the Deep Ritz method, neural network discretizations of variational regularization functionals, and weak PINNs. Despite their differing formulations, these methods constrain the neural trial function only through finitely many linear measurements, such as quadrature evaluations or finite-dimensional test spaces.

We show that this structural feature leads to ill-posed discrete optimization problems, manifested by non-uniqueness or degeneracy of minimizers, independently of the well-posedness of the underlying continuous variational problem.
\end{abstract}

\section{Introduction}
Neural network-based discretizations have become increasingly popular for the numerical solution of partial differential equations (PDEs) and variational problems. In these approaches, the unknown solution is represented by a neural network, and the resulting problem is treated numerically through a discretized loss functional.
This paradigm encompasses a range of methods, including physics-informed neural networks (PINNs), where differential equations are enforced in strong form at collocation points via pointwise residuals \cite{LaLiFo:98, RaPeKa:19}; weak or variational PINNs (wPINNs), which employ finite-dimensional test spaces and enforce the weak form of the governing equations \cite{DeMiMo:2024, Khodayi-MehrZavlanos:2020}; and the Deep Ritz method, which directly minimizes variational energy functionals over neural network trial spaces using pointwise derivatives \cite{EYu:18}.
In practice, when pointwise derivatives arise in such formulations, they are typically computed via automatic differentiation, which has enabled their efficient implementation in modern software frameworks.
Further related neural network discretization approaches include finite-difference PINNs (FD-PINNs), where differential operators are discretized via finite differences prior to training \cite{lim2022physics}, as well as neural network discretizations of variational regularization problems, where classical regularization functionals such as total variation or higher-order energies are discretized and evaluated on neural network ansatz functions \cite{LangerBehnamian:24}.

Despite their rapid adoption, neural network-based discretizations, most prominently PINNs, have been reported to exhibit practical limitations. In particular, PINNs are known to struggle when confronted with complex geometries, intricate boundary conditions, high-frequency solution components, or pronounced multiscale phenomena, often failing to recover accurate solutions \cite{raissi2018deep,fuks2020limitations, krishnapriyan2021characterizing, wang2022and}. 
By contrast, considerably less is known about the structural properties of variational neural formulations, including wPINNs and energy-based methods such as the Deep Ritz method. While these approaches are used in practice, a rigorous analysis of the well-posedness of the resulting discretized optimization problems does not yet seem to be available in general. 
For the Deep Ritz method, existing evidence points to possible difficulties already at the discrete level. In particular, numerical studies have reported instabilities in discretized formulations \cite{CourteZeinhofer2021}, and \cite{RiveraTaylorOmellaPardo2022} shows that quadrature errors can lead to misleading discrete loss values and poor approximations despite apparently successful optimization. Moreover, \cite{RiveraTaylorOmellaPardo2022} discusses several possible remedies, including Monte Carlo integration, adaptive integration, polynomial approximation, and regularization. 
Weak PINNs, referred to as variational PINNs in \cite{BeCaPi2022}, are examined there from a Petrov--Galerkin perspective, and instability and ill-posedness of weak formulations at the discrete level are reported. Motivated by discrete inf-sup considerations, stabilization is achieved by projecting the residual onto carefully chosen polynomial test spaces, yielding existence and uniqueness for specific trial-test pairings. This analysis, however, is formulation- and discretization-dependent and does not provide a general structural characterization of neural variational discretizations.

More recently, an analytical framework identifying structural ill-posedness of the discrete optimization problems arising from neural network-based PDE discretizations has been developed for strong-form PINNs and FD-PINNs \cite{Langer:26}. 
There, it is shown that strong-form PINNs based on pointwise differentiation give rise to ill-posed optimization problems admitting infinitely many solutions even when the underlying continuous PDE is well-posed. Importantly, the analysis also clarifies that replacing pointwise derivatives by finite-difference operators alters the structure of the discrete problem and restores classical discretization behavior on the stencil.
The results of \cite{Langer:26} reveal a structural mechanism underlying this ill-posedness, namely the combination of an infinite-dimensional neural trial class with loss terms that probe the solution only through finitely many measurements. However, that analysis is carried out entirely in the context of strong-form formulations and does not address variational discretizations, weak formulations, or energy-based methods. 

In this work, we develop an abstract analytical framework that isolates this finite-information mechanism and allows its systematic study independently of the particular PDE formulation. We then demonstrate how this framework applies to three representative variational neural discretizations: the Deep Ritz method, neural network discretizations of variational regularization functionals, and wPINNs. While these methods differ in formulation, they share the same structural feature that pointwise derivatives are combined with losses that depend only on finitely many measurements, such as quadrature points or finite-dimensional test spaces.

We show that this finite-information structure leads to ill-posedness of the resulting discrete optimization problems, manifested by non-uniqueness and the resulting degeneracy of the loss landscape, permitting minimizers arbitrarily far from the continuous solution, even when the underlying continuous variational problem is well-posed. 
The three settings considered in this paper thus serve as representative examples illustrating how the same mechanism identified for strong-form PINNs reappears in variational and weak formulations, though with different manifestations. In this sense, the present work provides a unified perspective on ill-posedness phenomena in neural discretizations of PDEs and variational problems.

The remainder of the paper is organized as follows. In \cref{sec:notation-prelim}, we introduce the analytical framework, neural network hypothesis classes, and the notion of finite-measurement loss functionals, which form the technical basis of our analysis. 
\Cref{Sec:DRM} investigates the Deep Ritz method and shows that quadrature combined with pointwise differentiation generically leads to ill-posed discrete optimization problems admitting infinitely many minimizers. 
In \cref{Sec:Regularization}, we extend this analysis to neural network discretizations of variational regularization problems and demonstrate that discretizations based on pointwise derivatives fail to enforce regularization at the discrete level; a comparison with finite-difference discretizations of the regularization term is also provided. 
\Cref{Sec:wPINN} is devoted to weak PINNs, where we show that ill-posedness arises already from the finite-dimensional test space, independently of numerical quadrature. 
Finally, we conclude in \cref{Sec:Conclusion} by discussing the implications of our findings.

\section{Preliminaries}\label{sec:notation-prelim}

In this section, we introduce notation and basic analytical concepts needed for the subsequent analysis. We fix the functional-analytic framework, define the neural network hypothesis classes, and introduce technical tools that will be used in the proofs of the main results.

\subsection{Analytical Framework}

Throughout the paper, we fix a spatial dimension $d\in\N$ and a bounded open domain $\Omega\subset\R^d$ with Lipschitz boundary $\partial\Omega$.
We write $\Gamma\subseteq \partial\Omega$ for a (measurable) portion of the boundary on which essential boundary conditions are prescribed.
Unless stated otherwise, all functions are understood to be defined on $\Omega$ and to be scalar-valued, i.e., $u:\Omega\to\R$.
We use $\Omega^h\subset\Omega$ to denote a finite set of interior points and $\Gamma^h\subset\Gamma$ a finite set of boundary points. We define their union by $Z^h:=\Omega^h\cup\Gamma^h$. For any finite set $A$, we write $|A|$ for its cardinality. A functional depending only on finitely many pointwise evaluations and finitely many of its derivatives at points in $\Omega^h$ and $\Gamma^h$ will be called \emph{finite-measurement} in the sense made precise in \cref{Sec:FiniteLoss}.

Let $1\le p\le \infty$. We write $|\cdot|_p$ for the standard $\ell^p$-norm on finite-dimensional Euclidean spaces (in particular on $\R^d$). Further, we denote by $L^p(\Omega)$ the usual Lebesgue spaces equipped with the norms $\|\cdot\|_{L^p(\Omega)}$, and by $W^{m,p}(\Omega)$ the Sobolev spaces of functions whose weak partial derivatives up to order $m\in\N_0$ belong to $L^p(\Omega)$, equipped with the norm \(\|u\|_{W^{m,p}(\Omega)} := \sum_{|\beta|\le m} \|D^\beta u\|_{L^p(\Omega)},\) where $\beta=(\beta_1,\dots,\beta_d)\in\N_0^d$ is a multi-index, $|\beta|:=\beta_1+\cdots+\beta_d$ denotes its order, and $D^\beta u$ denotes the corresponding weak derivative. We write $\Tr(u)$ for the trace of $u\in W^{1,p}(\Omega)$ on $\partial\Omega$.

Let $U$ be a Banach space. A functional $E:U\to(-\infty,\infty]$ is called \emph{lower semicontinuous} if \(E(u)\le \liminf_{n\to\infty} E(u_n)\) for every sequence $(u_n)_n\subset U$ converging to $u$ in the norm topology. It is called \emph{weakly lower semicontinuous} if the same implication holds for every sequence $(u_n)_n$ converging weakly to $u$ in $U$. We call $E$ \emph{coercive on $U$} if $E(u_n)\to\infty$ whenever $\|u_n\|_U\to\infty$. In the setting of the direct method of the calculus of variations, weak lower semicontinuity together with coercivity and weak closedness of the admissible set yields existence of minimizers, provided bounded sequences admit weakly convergent subsequences (e.g., in reflexive spaces).
Alternatively, lower semicontinuity together with compactness of the admissible set implies existence of minimizers.

\subsection{Neural networks}\label{Sec:NN}

Throughout this work, we consider fully connected feedforward neural networks, which we simply call \emph{neural networks}. All hidden layers employ the same scalar activation function \(\sigma:\mathbb{R}\to\mathbb{R}\), applied componentwise, while the output layer is linear. The network depth is denoted by \(L\in\mathbb{N}\), with input dimension \(d_0=d\) and output dimension \(d_L=1\). Such a network defines a mapping \(f:\mathbb{R}^{d_0}\to\mathbb{R}^{d_L}\) via the recursive relations
\begin{equation*}
\begin{aligned}
&\varphi_0(x)=x,\\
&\varphi_i(x)=\sigma\!\left(W_i\varphi_{i-1}(x)+b_i\right), \qquad i=1,\dots,L-1,\\
&f(x)=W_L\varphi_{L-1}(x)+b_L,
\end{aligned}
\end{equation*}
where \(W_i\in\mathbb{R}^{d_i\times d_{i-1}}\) and \(b_i\in\mathbb{R}^{d_i}\) are the weight matrices and bias vectors, respectively, and \((d_0,\dots,d_L)\) specifies the layer widths.

Let \(\mathcal{N}_L\) denote the collection of all depth-\(L\) neural networks mapping \(\mathbb{R}^d\) to \(\mathbb{R}\), allowing arbitrary widths in the hidden layers.
We then introduce the hypothesis class
\begin{equation*}
\mathcal{H} := \{\, f|_{{\Omega}} : f \in \mathcal{N}_L \} \subseteq \{\, u : {\Omega} \to \mathbb{R} \},
\end{equation*}
where $f|_{{\Omega}}$ denotes the restriction of $f$ to $\Omega$. No restriction is imposed on the hidden-layer widths; in particular, networks of different architectures are all included in \(\mathcal{H}\).
For fixed $M\in\N$, we denote by \(\mathcal{H}^M \subset \mathcal{H}\) the subclass of functions realizable by depth-\(L\) neural networks with exactly \(M\) parameters.
Clearly, \(\mathcal{H}\) can be expressed as the union of \(\mathcal{H}^M\) over all admissible values of \(M\).
All weights and biases are collected into a parameter vector \(\theta\in\mathbb{R}^M\), and we write \(f_\theta\) whenever it is convenient to emphasize this dependence.
In the width-unlimited setting, $\mathcal H$ is closed under finite linear combinations: if $u_1,\dots,u_k\in\mathcal H$ and $\lambda_1,\dots,\lambda_k\in\R$, then $\sum_{j=1}^k \lambda_j u_j\in\mathcal H$ for any $k\in\N$, cf.\ \cite[Section~A.1]{Langer:26}.
This property generally fails for a class $\mathcal H^{M}$ with fixed $M\in\N$.

The evaluation of differential operators acting on neural network outputs requires sufficient regularity.
This is automatically satisfied when the activation function \(\sigma\) is $C^\infty$, in which case all derivatives exist in the classical sense.
Common examples include the sigmoid, hyperbolic tangent, softplus, and other \(C^\infty\) activations.

However, we will often employ the rectified linear unit (ReLU), defined by \(\sigma(x)=\max\{0,x\}\) for all $x\in\R$.
Neural networks using this activation are referred to as ReLU neural networks.
Although such functions do not belong to \(C^r\) for \(r\geq 1\), the loss functions considered here only involve pointwise evaluations of \(f_\theta\) and its derivatives at finitely many collocation points.
For ReLU neural networks and $r\in\N_0$, define the $Z^h$-smooth hypothesis class by
\[
\mcHreg{r}(Z^h) :=\left\{ f_\theta \in\mcH\colon f_\theta \text{ is } C^r \text{ at every } z\in Z^h\right\} \subset \mathcal H.
\]
On this subset, all discrete loss terms are well-defined.
This restriction explains why ReLU activations can still be employed in PDE-based learning frameworks despite their limited global regularity.

Analogously, we define \(\mcHreg{r}^M(Z^h) \subset \mcHreg{r}(Z^h)\) as the subset consisting of depth-\(L\) neural networks with exactly \(M\) parameters.

\subsection{A Finite-Measurement Tool}\label{Sec:FiniteLoss}

In order to avoid repetition in the proofs of the non-uniqueness results below, we isolate a technical observation concerning loss functionals that depend only on finitely many measurements of the trial function and its derivatives. Such loss functionals arise naturally in all neural network-based discretizations considered in this work once numerical quadrature or collocation is employed. Although the precise analytical form of the loss varies between methods, its information content is always finite. The purpose of this subsection is to formalize this observation and to identify a generic non-uniqueness mechanism that will be invoked repeatedly in the subsequent proofs.

\subsubsection{Finite-measurement maps}
Let $H$ be a set of functions and let $m\in\mathbb N$. We call a map \(\mathcal M:H\to\mathbb R^m\) a finite-measurement map. A functional \(\mcJ^h:H\to\mathbb R\) is called a finite-measurement loss if there exists a function \(G:\mathbb R^m\to\mathbb R\) such that
\begin{equation}\label{eq:finite_measurement_loss}
\mcJ^h(u) = G\bigl(\mathcal M(u)\bigr)
\end{equation}
for all \(u\in H\).

This abstract formulation covers discrete loss functionals considered in the sequel, including Deep Ritz losses, variational regularization losses with pointwise derivatives, and finite-difference regularizers. 

\subsubsection{A generic non-uniqueness mechanism}

The following lemma identifies a structural source of non-uniqueness that arises whenever the loss
depends only on finitely many measurements.

\begin{lemma}[Non-uniqueness for finite-measurement losses]
\label{lem:finite_measurement_nonuniqueness}
Let $H$ be a set of functions that is closed under finite linear combinations. Let $\mcJ^h : H \to \mathbb{R}$ be a finite-measurement loss of the form \eqref{eq:finite_measurement_loss}, where the finite-measurement map $\mathcal{M}$ is linear.
Assume that there exists a nonzero function $\Phi \in H$ such that
\begin{equation}\label{eq:null_measurement}
\mathcal{M}(\Phi) = 0 .
\end{equation}
Then, for all $u \in H$ and all $\lambda \in \mathbb{R}$,
\begin{equation*}
\mcJ^h(u + \lambda \Phi) = \mcJ^h(u).
\end{equation*}
In particular, if $u$ is a minimizer of $\mcJ^h$ on $H$, then $u + \lambda \Phi$ is also a minimizer for all $\lambda \in \mathbb{R}$, and the minimizer is not unique.
\end{lemma}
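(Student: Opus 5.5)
The argument is a direct computation using linearity of $\mathcal M$ and closure of $H$; no earlier results are needed beyond the definitions in \cref{Sec:FiniteLoss}.

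First, fix $u\in H$ and $\lambda\in\R$. Since $H$ is closed under finite linear combinations, $u+\lambda\Phi\in H$, so $\mcJ^h(u+\lambda\Phi)$ is defined. By linearity of $\mathcal M$ and the null-measurement assumption \eqref{eq:null_measurement},
\[
\mathcal M(u+\lambda\Phi)=\mathcal M(u)+\lambda\,\mathcal M(\Phi)=\mathcal M(u).
\]
Applying $G$ and using the representation \eqref{eq:finite_measurement_loss} then gives $\mcJ^h(u+\lambda\Phi)=G(\mathcal M(u))=\mcJ^h(u)$.

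Second, let $u$ be a minimizer, so $\mcJ^h(u)\le\mcJ^h(v)$ for all $v\in H$. Each $u+\lambda\Phi$ lies in $H$ and attains the same value $\mcJ^h(u)$, which is the minimum; hence it is also a minimizer.

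Third, for non-uniqueness, note that $\Phi\neq 0$ means $\Phi$ is not the zero function. For $\lambda\neq\lambda'$, the difference $(u+\lambda\Phi)-(u+\lambda'\Phi)=(\lambda-\lambda')\Phi$ is therefore nonzero. So $\lambda\mapsto u+\lambda\Phi$ is injective, and there are infinitely many distinct minimizers.

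There is no genuine obstacle. The only points to state carefully are the membership $u+\lambda\Phi\in H$, which is where closedness of $H$ is used, and that linearity of $\mathcal M$ is taken on the linear span structure of $H$.
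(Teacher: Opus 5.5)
Your proof is correct and takes the same approach as the paper: linearity of $\mathcal M$ together with $\mathcal M(\Phi)=0$ gives $\mathcal M(u+\lambda\Phi)=\mathcal M(u)$, and closure of $H$ under linear combinations ensures $u+\lambda\Phi\in H$. Your explicit check that $\lambda\mapsto u+\lambda\Phi$ is injective (because $(\lambda-\lambda')\Phi\neq 0$) spells out the distinctness of the minimizers, which the paper leaves implicit.
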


\begin{proof}
By linearity of $\mathcal{M}$, we have
\[
\mathcal{M}(u + \lambda \Phi) = \mathcal{M}(u) + \lambda \mathcal{M}(\Phi) = \mathcal{M}(u),
\]
where the last equality follows from \eqref{eq:null_measurement}. Consequently,
\[
\mcJ^h(u + \lambda \Phi) = G\bigl(\mathcal{M}(u + \lambda \Phi)\bigr) = G\bigl(\mathcal{M}(u)\bigr) = \mcJ^h(u).
\]
Since $H$ is closed under finite linear combinations, $u + \lambda \Phi \in H$ for all
$\lambda \in \mathbb{R}$, which concludes the proof.
\end{proof}

\Cref{lem:finite_measurement_nonuniqueness} shows that non-uniqueness is a structural consequence of finite-measurement losses whenever the measurement map $\mathcal{M}$ has a nontrivial nullspace on $H$, independent of the well-posedness of the underlying continuous problem.
In the subsequent sections, we show that for the neural network classes considered here, nontrivial functions $\Phi$ satisfying \eqref{eq:null_measurement} can always be constructed, thereby leading to ill-posed discrete optimization problems.

\section{The Deep Ritz Method}\label{Sec:DRM}
We consider energies admitting a local integral representation
\[
\mcE(u) = \int_\Omega \mcL(\nabla u(z), u(z), z) \dz,
\]
where $\mcL : \mathbb{R}^d \times \mathbb{R} \times \Omega \to \mathbb{R}$ is a local integrand. The associated variational problem reads
\begin{equation}\label{Eq:Energy}
\min_{u \in \mathcal A} \mathcal E(u), \qquad \mathcal A := \bigl\{ u \in  W^{1,p}(\Omega) \colon \mathcal B(\Tr(u)) = 0 \text{ on } \Gamma \bigr\},
\end{equation}
where $\Gamma \subset \partial\Omega$, $\mathcal B$ encodes essential (Dirichlet-type) boundary conditions of order $0$ in the sense of traces on $\Gamma$, and $p\in [1,\infty)$.

The Deep Ritz method~\cite{EYu:18} aims to solve \eqref{Eq:Energy} by replacing $\mcA$ by a set of neural networks. While certain boundary conditions, such as homogeneous Dirichlet boundary conditions, can be imposed exactly by construction in neural network ansatz spaces, essential boundary conditions that are not built into the ansatz are typically enforced weakly in Deep Ritz-type methods.
Since we consider a quite general framework, to simplify our presentation we enforce essential boundary conditions weakly by a penalty approach leading to 
\begin{equation}\label{Eq:DeepRitz}
\min_{u_\theta \in \mcH} \mcE(u_\theta) + \alpha_\beta \int_\Gamma |\mcB(\Tr(u_\theta)(z))|_{}^p\, \ds,
\end{equation}
where $\alpha_\beta \geq 0$ is a penalty parameter. Here, to make \eqref{Eq:DeepRitz} well-defined, we assume $\mcH\subset W^{1,p}(\Omega)$, ensuring that weak gradients and traces on $\Gamma$ exist.

\subsection{Existence of minimizers}
Assume that $\mcA$ is nonempty, that $\mcE$ is proper and coercive on $\mcA$, and that $\mcE$ is weakly lower semicontinuous and bounded from below. Then the direct method of the calculus of variations guarantees the existence of a minimizer of \eqref{Eq:Energy}.
However, note that even if \eqref{Eq:Energy} has a solution, \eqref{Eq:DeepRitz} may fail to have a minimizer because $\mcH$ is generally not compact in $W^{1,p}(\Omega)$ and minimizing sequences may not admit convergent subsequences within $\mcH$. 
However, this issue can be solved by regularizing $\theta$ in \eqref{Eq:DeepRitz} and restricting the solution space to neural networks with a fixed finite number of weights and biases, yielding
\begin{equation}\label{Eq:DeepRitzReg}
\min_{u_\theta \in \mcH^M} \mcE(u_\theta) + \alpha_\beta \int_\Gamma |\mcB(\Tr(u_\theta)(z))|^p\, \ds + \alpha_\theta|\theta|_q = \min_{\theta \in \R^M} \mcE(u_\theta)+ \alpha_\beta \int_\Gamma |\mcB(\Tr(u_\theta)(z))|^p\, \ds+ \alpha_\theta|\theta|_q,
\end{equation}
where $\alpha_\theta > 0$ and $q\in\N\cup \{\infty\}$. 
Assume that the unregularized parameter-level functional
\[
\theta\mapsto E(u_\theta) +\alpha_\beta\int_\Gamma |B(\operatorname{Tr}u_\theta)(x)|^p\,d\Gamma
\]
is lower semicontinuous on \(\mathbb R^M\) and bounded from below.
Then the addition of the coercive parameter penalty \(\alpha_\theta |\theta|_q\), yields existence of a minimizer of \eqref{Eq:DeepRitzReg}.

In order to implement a solution method for \eqref{Eq:DeepRitz} and \eqref{Eq:DeepRitzReg}, the involved integrals need to be approximated by quadrature. Since the resulting discrete loss depends on pointwise values and gradients at the interior collocation points \(\Omega^h\) and on boundary point values at \(\Gamma^h\), it is convenient to work with neural network representatives on \(\overline\Omega\). We therefore pass from the variational trial class on \(\Omega\) to the corresponding class of neural network restrictions on \(\overline\Omega\). In particular, we define
\[
\overline{\mcH} := \{\, f|_{\overline\Omega} : f \in \mathcal N_L \,\} \subseteq \{\, u : \overline\Omega \to \mathbb R \,\}
\]
and
\[
\overmcHreg{r}(Z^h) := \left\{ f_\theta \in \overline{\mathcal H} \;\middle|\; f_\theta \text{ is } C^r \text{ at every } z\in Z^h \right\}.
\]
Further, we define \(\overline\mcH^M \subset \overline{\mcH}\) and \(\overmcHreg{r}^M(Z^h) \subset \overmcHreg{r}(Z^h)\) as the subsets consisting of depth-\(L\) neural networks with exactly \(M\) parameters. For functions in \(\overmcHreg{r}(Z^h)\), the boundary point values \(f_\theta(z)\) for \(z\in\Gamma^h\) are classically well defined, since these functions are defined on \(\overline\Omega\) and are continuous at every point of \(Z^h\).

With these definitions, the corresponding discrete Deep Ritz problems read
\begin{align}
&\min_{u_\theta \in \overmcHreg{1}(Z^h)}\left\{\mcJ_{\DR}^h(u_\theta):=\sum_{z\in\Omega^h} \omega_\mcL^z \mcL(\nabla u_\theta(z), u_\theta(z), z)+\alpha_\mcB \sum_{z\in \Gamma^h} \omega_\mcB^z |\mcB(\Tr(u_\theta)(z))|^p\right\},\label{Eq:DeepRitzDiscrete}\\
&\min_{u_\theta \in \overmcHreg{1}^M(Z^h)}\left\{\mcJ_{\DR}^h(u_\theta) + \alpha_\theta|\theta|_q\right\},\label{Eq:DeepRitzRegDiscrete}
\end{align}
where \(\omega_\mcL^z\) and \(\omega_\mcB^z\) are suitable quadrature weights, for instance \(\omega_\mcL^z = \frac{1}{|\Omega^h|}\) and \(\omega_\mcB^z = \frac{1}{|\Gamma^h|}\). Here, for \(z\in\Gamma^h\), the term \(\Tr(u_\theta)(z)\) is understood as the classical boundary point value \(u_\theta(z)\).

Note that even if the energy in \eqref{Eq:DeepRitz} is coercive its discrete counterpart does not need to be coercive and hence may fail to have a solution. We illustrate that by a simple example. 
\begin{example}\label{Ex:NoSolution}
Let $\zeta\in L^2(\Omega)\cap C(\overline{\Omega})$, then the Poisson energy
\[
\mcE(u) = \frac{1}{2} \int_\Omega |\nabla u|^2 \dz - \int_\Omega \zeta u \dz
\]
is coercive in $H^1_0(\Omega)$. 

Assume now that there is $z_0\in\Omega^h$ with $\zeta(z_0)\not= 0$ and consider the discrete functional
\[
\mcJ_{\DR}^h(u) =  \sum_{z\in\Omega^h} \omega_\mcL^z \left(\frac{1}{2}|\nabla u(z)|^2 -  \zeta(z) u(z)\right),
\]
where $\omega_\mcL^z > 0$ for all $z\in\Omega^h$ and $\nabla u(z)$ denotes the pointwise gradient of $u$ at the sampled collocation point $z$. Since the gradient term is evaluated only at finitely many points, $\sum_{z\in\Omega^h}\omega_\mcL^z|\nabla u(z)|^2$ does not control the nodal values $u(z)$.
In particular, one can choose admissible functions $u$ such that $|\nabla u(z)|$ is small at all $z\in\Omega^h$ while $u(z)$ has the same sign as $\zeta(z)$ and arbitrarily large magnitude at points where $\zeta(z)\neq 0$. Consequently, the linear term $-\sum_{z\in\Omega^h}\omega_\mcL^z\zeta(z)u(z)$ can dominate and drive $\mcJ_{\DR}^h(u)\to -\infty$. Hence, $\mcJ_{\DR}^h$ is not bounded below and not coercive. Moreover, $\mcJ_{\DR}^h$ does not attain its minimum in $\mcH$.
\end{example}

While \eqref{Eq:DeepRitzDiscrete} does not admit a minimizer in general, the existence of a minimizer for \eqref{Eq:DeepRitzRegDiscrete} follows if $\mcJ_{\DR}^h$ is lower semicontinuous and bounded from below, since the term $\alpha_\theta | \theta |_q$ renders the functional in \eqref{Eq:DeepRitzRegDiscrete} coercive on $\mathbb{R}^M$.

\subsection{Non-uniqueness of minimizers}

We now turn to the non-uniqueness of minimizers of the discrete Deep Ritz loss. The following example shows that, even when \eqref{Eq:DeepRitzDiscrete} admits a minimizer, uniqueness may fail.

\begin{example}
Consider the 1D differential equation
\begin{equation}\label{eq:DRM:counterexample1}
\begin{split}
&u''(z) = 0 \quad \text{for } z\in(0,T)\subset \R,\\
&u(0) = u_0\in\R, \qquad u(T) = u_T\in\R.
\end{split}
\end{equation}
The exact solution of \eqref{eq:DRM:counterexample1} is given by $u(z) = \frac{u_T-u_0}{T} z + u_0$. 

Choosing a set of collocation points $Z^h = \{z_i\}_{i=0}^{N+1}$, where $z_0=0$ and $z_{N+1}=T$, and $\overmcHreg{r}(Z^h)$ with $r=1$ as a set of ReLU neural networks, the Deep Ritz method solves
\begin{equation}\label{eq:DRM:counterexample2}
\min_{u_\theta\in \overmcHreg{1}(Z^h)} \frac{1}{N} \sum_{i=1}^N \frac{1}{2}|u_\theta'(z_i)|_2^2 + \alpha_\mcB \left(|u_\theta(0)-u_0|^2 + |u_\theta(T)-u_T|^2\right).
\end{equation}
Since the exact solution of \eqref{eq:DRM:counterexample1} is affine, it can be represented exactly by a ReLU neural network. 
In particular, the discrete loss \eqref{eq:DRM:counterexample2} admits global minimizers with zero loss, and these minimizers are not unique.

To see this, fix arbitrary values $\mu_1,\dots,\mu_N\in\R$. Choose pairwise disjoint intervals $I_i:=(z_i-\varepsilon,z_i+\varepsilon)\subset(0,T)$ with $\varepsilon>0$, and define $u_\theta$ such that
\[
u_\theta(0)=u_0,\qquad u_\theta(T)=u_T,\qquad u_\theta(z)=\mu_i\ \text{for all }z\in I_i,\ i=1,\dots,N,
\]
and $u_\theta$ is affine on each connected component of $(0,T)\setminus\bigcup_{i=1}^N I_i$.
Then $u_\theta$ is continuous and piecewise affine, satisfies $u_\theta'(z_i)=0$ for all $i\in\{1,\dots,N\}$ (since it is constant on $I_i$), and matches the boundary values exactly. Hence the functional in \eqref{eq:DRM:counterexample2} evaluated at $u_\theta$ yields zero. 
Since the values $\mu_1,\dots,\mu_N$ are arbitrary, \eqref{eq:DRM:counterexample2} has infinitely many minimizers.

If we replace $\overmcHreg{1}(Z^h)$ by $\overmcHreg{1}^M(Z^h)$ in \eqref{eq:DRM:counterexample2} the non-uniqueness persists, assuming that $M$ is sufficiently large. 
In fact a class of one-hidden-layer neural networks with 1 hidden neuron, i.e., $M=4$, is sufficiently expressive. Then $u_\theta(z) = \frac{u_T -u_0}{T + b} \sigma(z + b) + u_0$ for any $b \in (-T, -z_N)$ solves \eqref{eq:DRM:counterexample2}. In fact $u_\theta(0) = u_0$, $u_\theta(T)=u_T$ and $u_\theta'(z_i) = 0$ for $i\in\{1,\ldots,N\}$. Since this is a solution for any $b\in (-T, -z_N)$, there are infinitely many minimizers.

For $M=2$ (no hidden layer) one calculates that the unique solution of \eqref{eq:DRM:counterexample2} is 
\begin{equation}\label{Eq:DRM:counterexample3}
u_\theta(z) = \frac{\alpha_\mcB T(u_T - u_0)}{\alpha_\mcB T^2+1} z + \frac{2 \alpha_\mcB T^2 u_0 + u_0 + u_T}{2 \alpha_\mcB T^2+2},
\end{equation}
which is not a solution of \eqref{eq:DRM:counterexample1}. 
If the boundary conditions are enforced strongly, i.e.,
\begin{equation*}
\min_{\substack{u_\theta \in \overmcHreg{1}^M(Z^h) \\ u_\theta(0)=u_0,\; u_\theta(T)=u_T}}
\frac{1}{N}\sum_{i=1}^N |u_\theta'(z_i)|^2,
\end{equation*}
 then in the affine case ($M=2$) the  Deep Ritz problem admits a unique minimizer, which coincides with the exact PDE solution. The inconsistency observed above is of course a consequence of weak boundary enforcement combined with the discrete interior energy. In fact the solution \eqref{Eq:DRM:counterexample3} approaches the exact PDE solution for $\alpha_\mcB\to\infty$.
\end{example}
This example illustrates that the Deep Ritz loss may fail to approximate the continuous energy, leading to non-uniqueness or biased minimizers even for trivial elliptic problems. The following theorem shows the non-uniqueness issue for more general problems.
\begin{theorem}\label{Thm:DRM:Nonuniqueness}
Let $\Omega^h\subset\Omega$ and $\Gamma^h\subset\Gamma$ be fixed finite sets such that \( Z^h := \Omega^h\cup\Gamma^h \neq \emptyset \). Let $H\subset\overline \mcH$ be a class of depth-$L$ neural networks satisfying one of the following assumptions:
\begin{enumerate}[(i)]
\item $H=\overmcHreg{1}(Z^h)$ consists of ReLU neural networks with \(L \ge \big\lceil \log_2(d+1)\big\rceil +1.\)

\item $H=\overmcHreg{\ell}(Z^h)$, where \(\ell := 2|\Omega^h| + |\Gamma^h|,\) consists of neural networks with activation $\sigma\in C^{\ell}(\R)$ such that there exists $s\in\R$ with \(D^k\sigma(s)\neq 0\) for all $0\le k\le \ell$, and depth $L\ge 2$. If $L>2$, we additionally assume that $\sigma$ is strictly monotone.
\end{enumerate}
If the minimization problem $\argmin_{u_\theta\in H} \mcJ_{\DR}^h(u_\theta)$ admits a solution, then the set of minimizers is infinite.
\end{theorem}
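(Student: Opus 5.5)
We reduce the statement to \cref{lem:finite_measurement_nonuniqueness}. The discrete Deep Ritz loss $\mcJ_{\DR}^h$ depends on $u_\theta$ only through the vector
\[
\mathcal M(u) := \bigl( (u(z),\nabla u(z))_{z\in\Omega^h},\ (u(z))_{z\in\Gamma^h}\bigr)\in\R^{(d+1)|\Omega^h|+|\Gamma^h|},
\]
so $\mcJ_{\DR}^h = G\circ\mathcal M$ for the obvious $G$. The map $\mathcal M$ is linear on functions that are $C^1$ at every point of $Z^h$. Moreover, $\overmcHreg{1}(Z^h)$ and $\overmcHreg{\ell}(Z^h)$ are closed under finite linear combinations: $\overline\mcH$ is closed under linear combinations (width-unlimited, cf.\ \cite[Section~A.1]{Langer:26}), and a sum of functions that are $C^r$ at $z$ is again $C^r$ at $z$. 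It therefore suffices to construct, in each case, a nonzero $\Phi\in H$ with $\Phi(z)=0$ and $\nabla\Phi(z)=0$ for all $z\in\Omega^h$, and $\Phi(z)=0$ for all $z\in\Gamma^h$. The lemma then shows that $u^*+\lambda\Phi$ is a minimizer for every $\lambda\in\R$. These functions are pairwise distinct because $\Phi\ne0$, so the set of minimizers is infinite.

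\textbf{Case (i), ReLU.} Take $\Phi$ to be a continuous piecewise linear "hat" function supported in a small ball $B$ around a point $x_0\in\Omega\setminus Z^h$, chosen so that $\overline B\cap Z^h=\emptyset$. Then $\Phi$ vanishes identically on a neighbourhood of each $z\in Z^h$, hence is $C^1$ (indeed $C^\infty$) there, and all its measurements vanish. Such a compactly supported CPWL function exists, for instance $\max\{0,\,\varepsilon-|x-x_0|_\infty\}$ or a hat built on a simplex. By the known exact-representation result (Arora et al.), every CPWL function on $\R^d$ is realizable by a ReLU network of depth $\lceil\log_2(d+1)\rceil+1$. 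If $L$ exceeds this depth, we pad with identity layers, using $x=\sigma(x)-\sigma(-x)$ to realize the identity; this preserves the function. So $\Phi\in H$, $\Phi\neq 0$, and this case is done.

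\textbf{Case (ii), smooth $\sigma$.} Here compact support is unavailable, so we instead solve a finite interpolation problem. The $2|\Omega^h|+|\Gamma^h|=\ell$ count suggests restricting to ridge functions $\Phi(x)=\sum_j c_j\sigma(a_j\, w\cdot x+b_j)$ along a fixed direction $w$ that separates the points of $Z^h$, i.e.\ the values $t_z:=w\cdot z$ are pairwise distinct. Then $\nabla\Phi(z)=w\,\phi'(t_z)$ with $\phi(t):=\sum_j c_j\sigma(a_jt+b_j)$. The constraints reduce to $\phi(t_z)=0$ for all $z\in Z^h$ and $\phi'(t_z)=0$ for $z\in\Omega^h$, which are $\ell$ homogeneous linear conditions on the coefficients $c$. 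With $\ell+1$ neurons, a nonzero $c$ solving them exists. The main obstacle is ensuring that the resulting $\phi$ is not identically zero. We plan to choose the neurons $\sigma(a_jt+s)$ with small distinct $a_j$. Taylor expansion around $s$, using $D^k\sigma(s)\ne0$ for $k\le\ell$, shows that the span of these neurons approximates all polynomials of degree $\le\ell$. A Vandermonde/Wronskian-type argument then shows that $\ell+1$ such neurons are linearly independent as functions. Hence any nonzero $c$ gives $\phi\not\equiv0$; the constraints define a kernel of dimension at least one, and a nonzero element of it yields a nonzero $\Phi$.

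For $L>2$, we realize the first layer with this one-hidden-layer $\Phi$ and must propagate it through the further layers. Strict monotonicity of $\sigma$ lets us use a near-identity construction: either compose with $\sigma$ and an invertible affine map, or, more simply, make the extra hidden layers apply $\sigma$ to a scaled ridge input. Concretely, we take a depth-$L$ family of scalar functions $g_j(t)=\sigma(\alpha\,\sigma(\cdots\sigma(a_jt+b_j)\cdots))$, run the same linear-algebra argument on these, and use strict monotonicity to guarantee that the $g_j$ are non-constant and linearly independent. Finally, the $C^\ell$ regularity of $\sigma$ gives $\Phi\in\overmcHreg{\ell}(Z^h)$.
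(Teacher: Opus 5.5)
Your reduction to \cref{lem:finite_measurement_nonuniqueness} is exactly the paper's proof: a linear measurement map of values and gradients on $\Omega^h$ and values on $\Gamma^h$, together with closure of $H$ under linear combinations. The paper then simply cites \cite[Lemma~A.8]{Langer:26} and \cite[Lemma~A.4]{Langer:26} for the null function $\Phi$, whereas you build it yourself. Your case (i) is correct and is the same kind of compactly supported CPWL construction as in \cref{Lem:ReLU:InterpolationNN}.

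For case (ii) with $L=2$ your route differs mildly from the paper's. The cited lemma solves an interpolation problem with vanishing measurements plus a nonzero value at an extra point $z_0\in\Omega\setminus Z^h$ (see \cref{Rem:DistanceSolution}). You only need a nonzero kernel vector of $\ell$ homogeneous conditions on $\ell+1$ ridge neurons, together with their linear independence. That independence follows because $\bigl(a_j^k D^k\sigma(s)\bigr)_{k,j=0}^{\ell}$ is a diagonal matrix times a Vandermonde matrix, and distinct $a_j$ suffice; neither small $a_j$ nor distinct $t_z$ is needed. One detail: since $\sigma$ is only $C^\ell$, the Wronskian at $t=0$ gives independence only on neighbourhoods of $0$. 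Use $\sigma(a_j(t-t_0)+s)$ with $t_0$ in the open set $w\cdot\Omega$, so that $\phi$ is nonzero somewhere on $w\cdot\Omega$ and hence $\Phi\neq0$ as an element of $H$.

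The genuine gap is case (ii) with $L>2$. In the construction you commit to, $g_j(t)=\sigma(\alpha\sigma(\cdots\sigma(a_jt+b_j)\cdots))=h(a_jt+b_j)$ for a fixed composite $h$, and you invoke strict monotonicity to get linear independence of $g_0,\dots,g_\ell$. Strict monotonicity only makes each $g_j$ injective; it says nothing about independence (for an affine $h$, any three such functions are dependent). What your Wronskian argument actually needs is $D^kh(s')\neq0$ for all $0\le k\le\ell$ at a common point $s'$. The hypotheses give this for $\sigma$, not for compositions of $\sigma$, so the nonvanishing of $\Phi$ is unproved as written. Here are three ways to repair it.
\begin{enumerate}[(a)]
\item Follow the cited lemma and \cref{Lem:Smooth:InterpolationNN}: put width-one layers first, $f:=\sigma\circ\cdots\circ\sigma(w\cdot x)$ with $L-2$ factors, which is strictly monotone in $w\cdot x$. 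Then run your depth-2 argument in the last hidden layer in the variable $t=f(x)$. Since $\nabla(\phi\circ f)(z)=\phi'(f(z))\nabla f(z)$, the same $\ell$ scalar conditions suffice, and $f(\Omega)$ contains a nondegenerate interval in which to center. This is where strict monotonicity is genuinely used.
\item Make your first-mentioned option precise: post-compose the depth-2 null function $L-2$ times with $t\mapsto\sigma(at+b)-\sigma(b)$, $a\neq0$, absorbing constants into biases. This fixes $0$, preserves vanishing gradients by the chain rule, and maps nonzero values to nonzero values by injectivity of $\sigma$.
\item Keep your family, but insert biases so that each inner argument equals $s$ at the center, and take the scalings small. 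Then $h(s)=\sigma(s)\neq0$ and $D^kh(s)=\alpha\,\sigma'(s)\,D^k\sigma(s)+O(\alpha^2)\neq0$ for $1\le k\le\ell$, and this extends inductively to deeper compositions.
\end{enumerate}
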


\begin{proof}
We observe that the discrete Deep Ritz loss \eqref{Eq:DeepRitzDiscrete} is a finite-measurement loss in the sense of \cref{Sec:FiniteLoss}. More precisely, it can be written in the form
\[
\mcJ_{\DR}^h(u_\theta)=G\bigl(\mathcal{M}_{\DR}(u_\theta)\bigr),
\]
where \(\mathcal{M}_{\DR}\) is the linear measurement map collecting the interior values \(u_\theta(z)\), the interior gradients \(\nabla u_\theta(z)\) for \(z\in\Omega^h\), and the boundary trace values \(\operatorname{Tr}(u_\theta)(z)=u_\theta (z)\) for \(z\in\Gamma^h\), i.e., 
\[
\mathcal{M}_{\DR}:\overmcHreg{1}(Z^h) \to \mathbb R^{|\Omega^h|(d+1)+|\Gamma^h|},\qquad \mathcal{M}_{\DR}(u) = \bigl((u(z),\nabla u(z))_{z\in\Omega^h},(\Tr(u)(z))_{z\in\Gamma^h}\bigr).
\]
In case \((i)\), let \(H= \overmcHreg{1}(Z^h)\), and in case \((ii)\), let \(H=\overmcHreg{\ell}(Z^h)\). By \cite[Lemma A.8]{Langer:26} in case \((i)\) and \cite[Lemma A.4]{Langer:26} in case \((ii)\) there exists a nonzero function $\Phi\in H$ such that \( \mathcal M_{\DR}(\Phi)=0.\) The claim therefore follows directly from \cref{lem:finite_measurement_nonuniqueness} by noting that $H\subseteq \overmcHreg{1}(Z^h)$ and that $H$ is closed under linear combinations. 
\end{proof}

\begin{remark}[Hard enforcement at sampled boundary points]
\Cref{Thm:DRM:Nonuniqueness} remains valid if the boundary conditions are imposed as hard constraints only on the finite boundary set \(\Gamma^h\), i.e.,
\begin{equation}\label{Eq:DRM:DiscreteConstrained}
\min_{\substack{u_\theta \in H \\ \mcB(\Tr(u_\theta)(z)) = 0\ \forall z\in \Gamma^h}}\sum_{z \in \Omega_h} \omega_{\mathcal L}^z \,\mathcal L(\nabla u_\theta(z), u_\theta(z), z),
\end{equation}
where $H$ as in \cref{Thm:DRM:Nonuniqueness}. Indeed, the constraints then depend only on finitely many measurements, and the null perturbations used in the proof can be chosen to vanish on these measurements. 
This is different from strong enforcement on the whole boundary \(\Gamma\), which would require additional assumptions ensuring that the null perturbations satisfy the corresponding homogeneous boundary condition on all of \(\Gamma\).
\end{remark}

\begin{remark}[Distance to the true solution]\label{Rem:DistanceSolution}
In both the weak and hard enforcement cases, the discrete problem admits infinitely many minimizers, and infinitely many of them may be arbitrarily far from the solution of the continuous problem.
Let $H$ be one of the neural trial classes from \Cref{Thm:DRM:Nonuniqueness}, let $u^* \in W^{1,p}(\Omega)$ denote the solution of the underlying variational problem \eqref{Eq:Energy}, let $\hat u_\theta\in H$ be a minimizer of the discrete Deep Ritz loss \eqref{Eq:DeepRitzDiscrete} or \eqref{Eq:DRM:DiscreteConstrained}, and let $\Phi \in H$ be as in the proof of \Cref{Thm:DRM:Nonuniqueness}.
Since $\Phi \not\equiv 0$ on $\Omega$, we have $\|\Phi\|_{L^p(\Omega)} > 0$, and therefore
\[
\|\hat u_\theta + \lambda \Phi - u^*\|_{L^p(\Omega)} \ge |\lambda| \|\Phi\|_{L^p(\Omega)} - \|\hat u_\theta - u^*\|_{L^p(\Omega)} \;\longrightarrow\; \infty \quad \text{as } |\lambda| \to \infty.
\]
Moreover, the degeneracy is stronger than the existence of a single unbounded affine family of minimizers. In the proof of \Cref{Thm:DRM:Nonuniqueness}, the null function $\Phi$ is constructed by choosing a point $z_0 \in \Omega \setminus Z^h $ at which $\Phi$ is nonzero, while all measurements entering the discrete loss vanish, see \cite[Lemma A.4 \& Lemma A.8]{Langer:26}. 
Since $\Omega$ is open and $Z^h $ is finite, there are infinitely many such choices of $z_0$. Consequently, there exist infinitely many distinct nonzero null functions $\Phi \in H$, and hence infinitely many distinct affine families of minimizers of the form $\hat u_\theta + \lambda \Phi$, $\lambda \in \R$. 
In particular, for every sufficiently large $\epsilon>0$, for instance for $\epsilon>\|\hat u_\theta - u^*\|_{L^p(\Omega)}$, there exist infinitely many discrete minimizers $\tilde u_\theta \in H$ of $\mcJ_{\DR}^h$ such that
\[
\|\tilde u_\theta - u^*\|_{L^p(\Omega)} = \epsilon.
\]
\end{remark}

\begin{remark}[Width-limited architectures]
In \Cref{Thm:DRM:Nonuniqueness}, the hypothesis class $H$ is taken to be width-unlimited in order to guarantee closure under linear combinations. Let $\hat{u}_\theta\in H$ be a minimizer of \eqref{Eq:DeepRitzDiscrete} or \eqref{Eq:DRM:DiscreteConstrained} and $\Phi\in H$ as in the proof of \cref{Thm:DRM:Nonuniqueness}. 
Then, however, the constructed family of minimizers $\{\hat u_\theta + \lambda \Phi \colon \lambda\in\R\}$ consists of neural networks of finite width.
Consequently, the same non-uniqueness phenomenon persists for width-limited neural network classes, provided that the architecture is sufficiently expressive to realize the linear combination $\hat u_\theta + \lambda \Phi$.

We emphasize that this observation is qualitative. In contrast to the strong-form PINN setting in \cite{Langer:26}, we do not establish here a finite-width realizability result for discrete Deep Ritz minimizers analogous to \cite[Proposition 3.1]{Langer:26}, and therefore do not obtain an explicit architecture-level non-uniqueness statement. The reason is that such a result would require interpolation of the finitely many quantities entering the discrete loss, in particular pointwise gradient data, whereas the present variational setting is naturally formulated in terms of weak derivatives. To the best of our knowledge, a corresponding neural-network Hermite interpolation result is not available.
\end{remark}

\begin{remark}[Ill-posedness beyond minimizers]\label{Rem:IllPosedBeyondMinimizers}
In \cref{Thm:DRM:Nonuniqueness}, the result is formulated under the assumption that a minimizer of the discrete loss functional exists. This assumption is made for clarity but is not essential. Indeed, the structural degeneracy underlying the result (see \cref{Rem:DistanceSolution}) is independent of the existence of a minimizer and follows directly from the finite-measurement structure characterized in \cref{lem:finite_measurement_nonuniqueness}.

More precisely, let $H$ be as in \cref{Thm:DRM:Nonuniqueness}, then for any admissible trial function $u_\theta \in H$ there exist infinitely many distinct trial functions $\tilde u_\theta \in H$ with $\tilde u_\theta\neq u_\theta$ attaining the same loss value, i.e., $\mcJ^h_{\DR}(u_\theta) = \mcJ^h_{\DR}(\tilde u_\theta)$. This is a consequence of the nontrivial nullspace induced by the finite number of linear measurements defining the loss functional.
As a result, even in the absence of a minimizer, or when an optimization algorithm is terminated at an attained loss value, the discrete problem admits infinitely many candidates that are indistinguishable by the loss.

This observation applies uniformly to all discretizations falling within the finite-measurement framework described in \cref{Sec:FiniteLoss}.
\end{remark}

\begin{remark}\label{Rem:Omega}
Although we define $\overmcHreg{r}(Z^h)$ as a class of functions restricted to $\overline\Omega$, the statement of \cref{Thm:DRM:Nonuniqueness} remains valid if the neural networks are considered as globally defined functions $f_\theta:\mathbb R^d\to\mathbb R$. 
Indeed, the loss $\mcJ_{\DR}^h$ and the associated measurement map only depend on the values of $f_\theta$ and its derivatives on the finite set $Z^h$, and are therefore insensitive to modifications of $f_\theta$ outside $\overline\Omega$.
\end{remark}

While the discrete problem is generally ill-posed and admits non-unique minimizers, it is nevertheless possible to characterize conditions under which the minimizing solution is uniquely determined on the collocation points.

\begin{proposition}[Uniqueness on collocation points]\label{Prop:DRM:uniqueness}
Let \(H \subset \overmcHreg{1}(Z^h)\) be a neural trial class that is closed under convex combinations, and let \(\Omega^h\subset \Omega\) and \(\Gamma^h\subset \Gamma\) be fixed finite sets such that \(Z^h:=\Omega^h \cup \Gamma^h \neq \emptyset\). 
Suppose that \(\omega_{\mcL}^z>0\) for all \(z\in\Omega^h\), \(\omega_{\mcB}^z>0\) for all \(z\in\Gamma^h\), and \(\alpha_{\mcB}\ge 0\).
Assume that, for every \(z\in\Omega^h\), the map
\[
(\xi,s)\mapsto \mcL(\xi,s,z)
\]
is strictly convex. Moreover, if $\alpha_\mcB>0$, the boundary penalty is strictly convex in the trace values (e.g., $p>1$ and $\mcB$ is linear and injective).
If \eqref{Eq:DeepRitzDiscrete} possesses a solution, then any two minimizers $u_{\theta_1},u_{\theta_2}\in H$ of \eqref{Eq:DeepRitzDiscrete} satisfy
\[
\nabla u_{\theta_1}(z)=\nabla u_{\theta_2}(z), \quad u_{\theta_1}(z)=u_{\theta_2}(z) \qquad \forall z\in\Omega^h
\]
and if $\alpha_\mcB > 0$,
\[
\Tr(u_{\theta_1})(z)=\Tr(u_{\theta_2})(z)\quad \forall z\in\Gamma^h.
\]
\end{proposition}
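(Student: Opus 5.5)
The proof is a standard strict-convexity argument, run at the level of the measurement vector rather than on the nonconvex neural class. Let $u_{\theta_1},u_{\theta_2}\in H$ be two minimizers of \eqref{Eq:DeepRitzDiscrete} with common minimal value $m:=\mcJ_{\DR}^h(u_{\theta_1})=\mcJ_{\DR}^h(u_{\theta_2})$. By assumption $H$ is closed under convex combinations, so the midpoint $w:=\tfrac12(u_{\theta_1}+u_{\theta_2})$ lies in $H\subset\overmcHreg{1}(Z^h)$. Hence $w$ is admissible, and $\mcJ_{\DR}^h(w)\ge m$.

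The measurement map $\mathcal M_{\DR}$ from the proof of \cref{Thm:DRM:Nonuniqueness} is linear. It follows that
\[
\bigl(\nabla w(z),w(z)\bigr)=\tfrac12\bigl(\nabla u_{\theta_1}(z),u_{\theta_1}(z)\bigr)+\tfrac12\bigl(\nabla u_{\theta_2}(z),u_{\theta_2}(z)\bigr)\quad \text{for all } z\in\Omega^h,
\]
and the analogous identity holds for the point values $w(z)$, $z\in\Gamma^h$.

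Write $\mcJ_{\DR}^h=\sum_{z\in\Omega^h}\omega_\mcL^z\,\mcL_z+\alpha_\mcB\sum_{z\in\Gamma^h}\omega_\mcB^z\,P_z$. Each summand is a convex function of its own measurement pair (for $\mcL_z$) or point value (for $P_z$). Applying convexity term by term and using nonnegative weights gives $\mcJ_{\DR}^h(w)\le\tfrac12\mcJ_{\DR}^h(u_{\theta_1})+\tfrac12\mcJ_{\DR}^h(u_{\theta_2})=m$. Combined with $\mcJ_{\DR}^h(w)\ge m$, this forces equality.

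Equality in the sum forces equality in every summand. Indeed, each summand satisfies a weak midpoint inequality, and the weights are nonnegative, so if any one inequality were strict the total would drop strictly below $m$. Thus for every $z\in\Omega^h$ we get
\[
\mcL\bigl(\tfrac12(\xi_1+\xi_2),\tfrac12(s_1+s_2),z\bigr)=\tfrac12\mcL(\xi_1,s_1,z)+\tfrac12\mcL(\xi_2,s_2,z),
\]
with $\xi_i=\nabla u_{\theta_i}(z)$ and $s_i=u_{\theta_i}(z)$. Since $\omega_\mcL^z>0$, strict convexity of $(\xi,s)\mapsto\mcL(\xi,s,z)$ then yields $(\xi_1,s_1)=(\xi_2,s_2)$.

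If $\alpha_\mcB>0$, then $\alpha_\mcB\omega_\mcB^z>0$, and the same argument applies to $t\mapsto|\mcB(t)|^p$. This map is strictly convex for $p>1$ with $\mcB$ linear and injective, since it is a strictly convex power of a norm composed with an injective linear map. This gives equality of the traces on $\Gamma^h$. If $\alpha_\mcB=0$, the boundary term is absent and no claim is made there.

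The only delicate point is to avoid arguing convexity of $\mcJ_{\DR}^h$ in $\theta$ or on $H$ as a whole, which fails. Everything must pass through linearity of the measurements and the convex-combination closure of $H$, which is exactly why that hypothesis is imposed. One should also check that $\mcJ_{\DR}^h(w)$ is finite and well-defined, which follows from $w\in\overmcHreg{1}(Z^h)$. Beyond this the argument is routine, and I expect no genuine obstacle.
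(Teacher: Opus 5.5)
Your proof is correct and is essentially the paper's argument. The paper takes a convex combination $\lambda u_{\theta_1}+(1-\lambda)u_{\theta_2}$ with $\lambda\in(0,1)$ and gets a strict inequality contradicting minimality whenever the measurements differ at some point; you use the midpoint and force equality summand by summand, which is the same strict-convexity mechanism via linearity of the pointwise measurements and convex-combination closure of $H$. One small caveat: a $p$-th power of a general norm composed with an injective linear map need not be strictly convex, but it is here because the trace values are scalar (so the range of $\mcB$ is one-dimensional), and in any case strict convexity of the boundary penalty is a hypothesis of the proposition.
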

\begin{proof}
Assume that \(u_{\theta_1},u_{\theta_2}\in H\) are two minimizers and let
\[
u:=\lambda u_{\theta_1}+(1-\lambda)u_{\theta_2},\qquad \lambda\in(0,1).
\]
Since \(H\) is closed under convex combinations, we have \(u\in H\).
If, for some \(z\in\Omega^h\),
\[
(\nabla u_{\theta_1}(z),u_{\theta_1}(z)) \neq (\nabla u_{\theta_2}(z),u_{\theta_2}(z)),
\]
then strict convexity of \(\mcL\) in its first two arguments, together with positivity of the quadrature weights, gives
\[
\mcJ^h_{\DR}(u) < \lambda \mcJ^h_{\DR}(u_{\theta_1}) + (1-\lambda)\mcJ^h_{\DR}(u_{\theta_2}),
\]
contradicting minimality. 

If \(\alpha_{\mathcal B}>0\) and \(\Tr(u_{\theta_1})(z)\neq \Tr(u_{\theta_2})(z)\) for some \(z\in\Gamma^h\), then the linearity of the trace and the strict convexity of the boundary penalty in the trace values yield the same contradiction.
\end{proof}
Note that the objective in \eqref{eq:DRM:counterexample2} is not strictly convex in $u_\theta$ and hence distinct minimizers do not necessarily coincide on collocation points. More generally we remark the following:

\begin{remark}[Derivative-only energies do not control point values]
Consider a Deep Ritz discretization whose integrand is independent of the function value, i.e., $\mathcal{L}(\nabla u,u,z)=\mathcal{L}(\nabla u,z)$ (with a slight abuse of notation), and impose Dirichlet data only through a boundary penalty or hard constraint. Then the resulting discrete loss functional does not control interior point values of $u$.

Indeed, for any $u_\theta\in {H} \subseteq \overmcHreg{1}(Z^h)$ satisfying the imposed boundary condition and any finite set of interior points $\Omega^h$, there exists $\tilde u_\theta\in {H}$ satisfying the same boundary condition such that
\[
\nabla \tilde u_\theta(z)=\nabla u_\theta(z)\quad\text{for all }z\in\Omega^h,
\]
but $\tilde u_\theta(z)\neq u_\theta(z)$ for at least one $z\in\Omega^h$. Consequently, the discrete energy (and hence the loss) cannot enforce agreement of trial functions at the collocation points.
\end{remark}

Hence, the Deep Ritz method shares the same fundamental limitation as PINNs: even if the underlying differential equation admits a unique solution, the corresponding discretized optimization problem \eqref{Eq:DeepRitzDiscrete} may admit infinitely many minimizers, or fail to be solvable, thereby rendering the discrete optimization problem ill-posed; cf.\ \cite{Langer:26} for the analogous result for PINNs.

Note that \cref{Thm:DRM:Nonuniqueness,Prop:DRM:uniqueness} do not apply to \eqref{Eq:DeepRitzRegDiscrete}. Establishing an analogous result in this setting is more delicate, since the penalty term $\alpha_\theta | \theta|_q$ acts directly on the weights and biases of the minimizers. It is therefore unclear whether the penalization restores uniqueness. A rigorous investigation of this question, likely requiring different analytical techniques, is beyond the scope of the present work.

The ill-posedness identified above is a consequence of replacing the continuous energy by a finite quadrature formula. Possible strategies for dealing with the resulting effects are discussed in \cite{RiveraTaylorOmellaPardo2022}, including Monte Carlo integration, adaptive integration, polynomial approximation, and regularization. These strategies are treated there mainly from a practical and numerical perspective. For the regularization approach, an additional analytical motivation is provided only in a simplified setting, and its extension to more general network architectures or nonsmooth integrands is not clear. Whether these strategies merely improve practical behavior or can also remove the ill-posedness identified here is beyond the scope of the present work.

\section{Regularization Methods}\label{Sec:Regularization}

For a fixed integer $m\geq 1$, we denote by \(\big(D^\beta u(x)\big)_{1\leq |\beta|\leq m}\in\mathbb{R}^{\eta}\) the collection of all weak partial derivatives of $u: \Omega \to \R$ at $x\in\Omega$ of total order between $1$ and $m$, where \(\eta \;=\; \sum_{k=1}^m \binom{d+k-1}{k}.\)
Let $R\colon\mathbb{R}^{\eta}\to[0,\infty]$ be a Borel-measurable function with $R(0)=0$. We then define the regularization functional
\begin{equation}\label{Eq:Regularizer}
\mathcal{R}(u) \;:=\; \int_{\Omega} R \left(\left(D^\beta u(x)\right)_{1\leq |\beta|\leq m} \right )\dx,
\end{equation}
with the understanding that $\mcR(u)=+\infty$ whenever $R\left(\left(D^\beta u(x)\right)_{1\leq |\beta|\leq m} \right )\notin L^1(\Omega)$.

Given data $g\in L^2(\Omega)$ and weights $\alpha_1,\alpha_2\ge 0$, we then consider the variational problem
\begin{equation}\label{Eq:Regularization:ContProblem}
\min_{u\in \mcA} \left\{ \mcJ_{\reg}(u) := \alpha_1\|u-g\|_{L^1(\Omega)} + \alpha_2\|u-g\|_{L^2(\Omega)}^2 + \mathcal{R}(u)\right\},
\end{equation}
where $\mcA$ is a suitable admissible function space (e.g., a weakly closed subset of $W^{m,p}(\Omega)$ for some $p\ge 1$).
In order to guarantee the existence of minimizers of \eqref{Eq:Regularization:ContProblem}, additional assumptions on $R$ ensuring coercivity and weak lower semicontinuity of $\mathcal R$ are typically required.
The formulation of $\mcR$ is deliberately kept general in order to cover a broad class of variational regularizers commonly used in imaging and inverse problems.
In particular, the considered framework encompasses first- and higher-order, convex and non-convex regularization functionals commonly used in practice, including:
\begin{enumerate}
\item \emph{First-order convex regularization (Tikhonov):}
$\mcR(u) = \|\nabla u\|_{L^p(\Omega)}^p$ for $p \in \N$.

\item \emph{First-order nonsmooth regularization (total variation):}
$\mcR(u) = \int_\Omega |\nabla u|_\nu \dx$, which corresponds to the anisotropic total variation (TV) for $\nu = 1$ and the isotropic TV for $\nu = 2$
\cite{RuOsFa:92}.

\item \emph{Second-order regularization (Hessian-based):}
$\mcR(u) = \int_\Omega |\nabla^2 u|_2 \dx$ \cite{Scherzer1998, LysLunTai:03, HinterbergerScherzer:06, BergouniouxPiffet2010, LaTaCh:13}.

\item \emph{Mixed first- and second-order regularization (TV--Hessian):}
\[
\mcR(u) = \int_\Omega \rho(x)\,|\nabla^2 u|_2 + \bigl(1-\rho(x)\bigr)\,|\nabla u|_2 \dx,
\]
where $\rho\colon\Omega\to[0,1]$ is a locally adaptive weight \cite{LysakerTai:06, PapafitsorosSchonlieb:14}.

\item \emph{Edge-adaptive higher-order regularization (TV--Laplacian):}
\[
\mcR(u) = \rho_1 \int_\Omega |\nabla^2 u|_2 \dx
+\rho_2 \int_\Omega \rho(|\nabla u|_2) \,(\Delta u)^2 \dx,
\] 
where $\rho_1,\rho_2 \ge 0$, and $\rho:[0,\infty) \to [0,\infty)$ is a smooth, nonincreasing edge-adaptive weight that suppresses higher-order smoothing near edges \cite{ChMaMu:00}.

\item \emph{Geometric regularization (Euler's elastica):}
$\mcR(u) = \int_\Omega \left( \rho_1 + \rho_2 \left(\nabla \cdot \frac{\nabla u}{|\nabla u|} \right)^2 \right) |\nabla u| \dx$, with parameters $\rho_1,\rho_2 \ge 0$ \cite{ChKaSh:02, TaHaCh:11}.

\item \emph{Non-convex regularization:}
$\mcR(u) = \int_\Omega |\nabla u|_2^p \dx$ for $0 < p < 1$ \cite{Chartrand2007, NiNgZhCh:08}.
\end{enumerate}

The restriction $1 \le |\beta|$ excludes zeroth-order regularizers of the form $\|u\|_{L^p(\Omega)}$ for $p>0$, which do not control derivatives and therefore fall outside the class of variational models considered here.

The choice of the data fidelity term is typically closely tied to the statistical properties of the noise affecting the observations. A purely quadratic $L^2$ data term corresponds to the classical least-squares formulation and is optimal under additive Gaussian noise assumptions; see for example \cite{ChCaCrNoPo:10}.
In contrast, an $L^1$ data fidelity term is well-known to be robust with respect to impulsive noise and has been studied in the context of total variation regularization; see, e.g., \cite{Alliney:97, Nikolova:02, Nikolova:04}.
In practical applications, data are often corrupted by a combination of Gaussian noise and impulsive perturbations. To account for such mixed noise models within a single variational formulation, combined $L^1$-$L^2$ data fidelity terms have been proposed, leading to minimization problems of the form considered here; see, in particular, \cite{HintermullerLanger:13, Langer:17a}. The mixed $L^1$-$L^2$ fidelity thus provides a flexible data term that interpolates between the two classical noise models.

Replacing $\mcA$ in \eqref{Eq:Regularization:ContProblem} by a set of neural networks $\mcH \subseteq \mcA$ yields
\begin{equation}\label{Eq:Regularization:NN}
\argmin_{u_\theta\in\mcH} \mcJ_{\reg}(u_\theta).
\end{equation}
Such an approach has been recently proposed in \cite{LangerBehnamian:24} in the context of total variation minimization. In particular, there, a constrained version of \eqref{Eq:Regularization:NN} is considered, i.e., instead of $\mcH$ the functional in \eqref{Eq:Regularization:NN} is optimized over the compact set $\mcH^M_c:=\{u_\theta\in\mcH^M \colon |\theta|_\infty \le c\}$. This, in fact, guarantees the existence of minimizers of $\mcJ_{\reg}$, if $\mcR$ is lower semicontinuous, which is the case for total variation; cf.\ \cite[Theorem 3.4]{LangerBehnamian:24}. 

To practically evaluate the integrals in \eqref{Eq:Regularization:NN} numerical quadrature is usually required. Since the resulting discrete loss depends only on pointwise values of \(u_\theta\) and its derivatives at the interior quadrature points \(\Omega^h\), it is natural to work on the pointwise regular neural trial classes introduced in \cref{sec:notation-prelim}, i.e., \(\mcHreg{m}(\Omega^h)\) and \(\mcHreg{m}^M(\Omega^h)\). With this notation, the corresponding discrete regularization problem reads
\begin{equation}\label{Eq:Regularization:QuadratureLoss}
\argmin_{u_\theta\in\mcHreg{m}(\Omega^h)} \left\{ \mcJ_{\reg}^h(u_\theta):= \sum_{\tau=1}^2 \alpha_\tau \sum_{x\in\Omega^h} \omega_\mcD^x |u_\theta(x) - g^h(x)|^\tau + \mcR^h(u_\theta) \right\},
\end{equation}
where $\mcR^h(u_\theta) = \sum_{x\in\Omega^h} \omega_\mcR^x R \left(\left(D^\beta u_\theta(x)\right)_{1\leq |\beta|\leq m} \right )$, $g^h:\Omega^h\to \R$ denotes the restriction (or sampling) of $g$ to the grid $\Omega^h$, and $\omega_\mcD^x, \omega_{\mcR}^x\geq 0$ are respective quadrature weights typically chosen as $\omega_\mcD^x = \omega_{\mcR}^x = \frac{1}{|\Omega^h|}$.
Note that only the integrals are approximated by quadrature in \eqref{Eq:Regularization:QuadratureLoss} and the differential operators $D^\beta$ are  evaluated pointwise.
 
We now analyze the variational problem associated with $\mcJ_{\reg}^h$ and establish existence of minimizers as well as non-uniqueness.
\begin{theorem}[Existence and non-uniqueness]\label{Thm:Regularization:Existence:NonUniqueness}
Let $m\in\N$, let $\Omega^h\subset\Omega$ be a nonempty finite set, and let $H \subseteq \mcH$ be a class of depth-$L$ neural networks satisfying one of the following:
\begin{enumerate}[(i)]
\item $H = \mcHreg{m}(\Omega^h)$ consists of ReLU neural networks of depth $L \ge \lceil \log_2(d+1)\rceil +1$.

\item $H= \mcHreg{\ell}(\Omega^h)$, where \(\ell := |\Omega^h|(m+1),\) consists of neural networks with smooth activation functions $\sigma\in C^{\ell}(\R)$ such that there exists $s\in\R$ with $D^k \sigma(s)\not=0$ for all $0\le k \le \ell$ and depth $L\ge 2$.
If $L>2$, we additionally assume that $\sigma$ is strictly monotone.
\end{enumerate}
Then $\argmin_{u_\theta\in H} \mcJ_{\reg}^h(u_\theta)$ always has a solution $u_\theta \in H$ with $\mcJ_{\reg}^h(u_\theta) = 0$. In particular, there exist infinitely many such solutions. 
\end{theorem}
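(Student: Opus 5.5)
Since $R\ge 0$, the absolute values are nonnegative, and $\alpha_\tau,\omega^x_\mcD,\omega^x_\mcR\ge 0$, we have $\mcJ_{\reg}^h\ge 0$ on $H$. So existence of a minimizer follows once we exhibit some $u_\theta\in H$ with $\mcJ_{\reg}^h(u_\theta)=0$. By $R(0)=0$, it suffices to find $u_\theta\in H$ that interpolates the data, $u_\theta(x)=g^h(x)$ for all $x\in\Omega^h$, and whose derivatives $D^\beta u_\theta(x)$ vanish for all $1\le|\beta|\le m$ and $x\in\Omega^h$. This turns the problem into a finite Hermite-type interpolation problem with prescribed values and zero higher derivatives at the finitely many points of $\Omega^h$.

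\emph{Case (i), ReLU.} I would construct, for each $x_0\in\Omega^h$, a continuous piecewise linear function $\psi_{x_0}$ with three properties: it equals $1$ on a small neighbourhood of $x_0$, it vanishes on neighbourhoods of all other points of $\Omega^h$, and it is constant near every point of $\Omega^h$. Then $u_\theta:=\sum_{x_0\in\Omega^h} g^h(x_0)\psi_{x_0}$ is locally constant near each point of $\Omega^h$. Hence it is $C^m$ (indeed $C^\infty$) there, with vanishing derivatives and the prescribed values. A natural choice for $\psi_{x_0}$ is a continuous piecewise linear ``plateau'' function, for instance a min/max combination of affine pieces of trapezoidal shape supported in a small cube. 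Such functions are representable by ReLU networks of depth $\lceil\log_2(d+1)\rceil+1$ by the standard CPWL representation result; this is exactly what the depth assumption is for, and I would cite the corresponding lemma of \cite{Langer:26}, namely Lemma A.8 and its construction. Closure of $\mcH$ under linear combinations then places $u_\theta$ in $H$.

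\emph{Case (ii), smooth activation.} I would view the data as $\ell=|\Omega^h|(m+1)$ linear conditions: the values together with derivative data up to order $m$ at each point. The cited result \cite[Lemma A.4]{Langer:26} can be read as showing that the evaluation functionals are linearly independent on $H$, or equivalently that the linear measurement map onto these data is surjective. This is where the condition $D^k\sigma(s)\neq0$ for $k\le\ell$ enters: with one-hidden-layer networks $\sigma(w\cdot x+b)$ and directions $w$ chosen generically, the Taylor/Vandermonde structure produces a full-rank system. Given surjectivity, I choose $u_\theta\in H$ with the required values and zero derivatives. For $L>2$, strict monotonicity of $\sigma$ lets me pass a hidden layer through approximately linear regimes, so the same interpolation argument applies in higher depth. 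I expect this step to be the main obstacle, since the cited lemma is phrased for producing null functions rather than for interpolation. If needed, I would instead derive surjectivity from the existence of null functions: for each measurement functional, the cited lemma gives a function in the kernel of all the others but not of that one, and linear combinations of these functions achieve arbitrary data.

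\emph{Infinitely many solutions.} The loss is a finite-measurement loss with the linear measurement map $\mathcal M(u)=\bigl(u(x),(D^\beta u(x))_{1\le|\beta|\le m}\bigr)_{x\in\Omega^h}$. In both cases, \cite[Lemma A.8/A.4]{Langer:26} provides a nonzero $\Phi\in H$ with $\mathcal M(\Phi)=0$; in case (i) one can simply take a plateau bump supported away from $\Omega^h$. Then \cref{lem:finite_measurement_nonuniqueness} shows that $u_\theta+\lambda\Phi$, $\lambda\in\R$, are infinitely many distinct zero-loss minimizers.
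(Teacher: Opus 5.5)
Your treatment of case (i) and your non-uniqueness argument coincide with the paper's: plateau functions realized through the CPWL representation theorem, then a null function from \cite[Lemma~A.8/A.4]{Langer:26} fed into \cref{lem:finite_measurement_nonuniqueness}. The only omission there is that for $L>\lceil\log_2(d+1)\rceil+1$ you must pad with identity layers such as $t=\sigma(t)-\sigma(-t)$.

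The genuine gap is the existence step in case (ii). \cite[Lemma~A.4]{Langer:26} is a null-function result. It yields a $\Phi$ whose measurements vanish on the sample set while $\Phi(z_0)\neq 0$ at an extra point $z_0$ outside that set. It says nothing about linear independence of the measurement functionals on $H$. Your fallback does not follow from it either: applying it to $\Omega^h\setminus\{x_i\}$ with $z_0=x_i$ gives $\Phi(x_i)\neq 0$ but leaves the derivatives at $x_i$ uncontrolled, and nothing isolates a single functional $u\mapsto D^\beta u(x_i)$. Your count is also wrong for $d\ge 2$. The full measurement map has $|\Omega^h|(\eta+1)$ components, not $\ell=|\Omega^h|(m+1)$, so the ``generic Taylor/Vandermonde full-rank'' heuristic is not matched to the hypothesis on $\sigma$ and remains unproved. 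For $L>2$, ``approximately linear regimes'' give only approximate interpolation. Making this exact would need an openness-of-full-rank argument, which itself presupposes the depth-$2$ surjectivity you have not established.

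The missing idea is that full surjectivity is never needed, because the prescribed derivative data are zero. For $L=2$ the paper cites \cite[Corollary~A.3]{Langer:26} directly. For $L>2$, in \cref{Lem:Smooth:InterpolationNN}, it proceeds in four steps:
\begin{enumerate}[(a)]
\item Pick $v\in\R^d$ with the values $v\cdot x_i$ pairwise distinct.
\item Compose with $L-2$ width-one layers $t\mapsto\sigma(t)$ to get a scalar map $f$. The values $t_i:=f(x_i)$ stay distinct because $\sigma$ is injective.
\item Apply the \emph{one-dimensional} Hermite interpolation result \cite[Corollary~A.3]{Langer:26} to get a one-hidden-layer $\Phi:\R\to\R$ with $\ell$ neurons, $\Phi(t_i)=g^h(x_i)$ and $\Phi^{(k)}(t_i)=0$ for $1\le k\le m$.
\item Set $u_\theta:=\Phi\circ f$. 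This is a depth-$L$ network in $H$, and by Fa\`a di Bruno each $D^\beta u_\theta(x_i)$ with $1\le|\beta|\le m$ is a sum of terms containing a factor $\Phi^{(k)}(t_i)$, $1\le k\le|\beta|$, hence vanishes.
\end{enumerate}
The one-dimensional problem has exactly $|\Omega^h|(m+1)=\ell$ conditions, which is why the stated hypothesis on $\sigma$ suffices. Strict monotonicity is used for injectivity of $f$, not for approximate linearity.
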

\begin{proof}
The functional $\mcJ_{\reg}^h$ depends only on finitely many pointwise values of $u_\theta$ and its derivatives up to order $m$ at the quadrature points $\Omega^h$. Hence, $\mcJ_{\reg}^h$ can be written in the form \(\mcJ_{\reg}^h(u_\theta) =G\left(\mathcal M_{\reg}(u_\theta)\right),\) where the linear measurement map $\mathcal M_{\reg}: \mcHreg{m}(\Omega^h)\to \R^{|\Omega^h|(\eta+1)}$ is given by
\[
\mathcal M_{\reg}(u_\theta) = \bigl(u_\theta(x), \ (D^\beta u_\theta(x))_{1 \le |\beta| \le m} \bigr)_{x \in \Omega^h}.
\]
Note that $H \subseteq \mcHreg{m}(\Omega^h)$, since $\ell> m$, where $H=\mcHreg{m}(\Omega^h)$ in case \((i)\) and $H=\mcHreg{\ell}(\Omega^h)$ in case \((ii)\). By the finite-measurement representation above, $\mcJ_{\reg}^h$ is a finite-measurement loss in the sense of \cref{Sec:FiniteLoss}. 

\emph{Existence.}
We construct $u_\theta \in H$ such that
\begin{equation}
\label{Eq:InterpConds}
u_\theta(x)=g^h(x)\quad\text{for all }x\in\Omega^h, \qquad D^\beta u_\theta(x)=0\quad\text{for all }x\in\Omega^h,\ 1\le|\beta|\le m.
\end{equation}
Equivalently, $\mathcal M_{\reg}(u_\theta)=y$ where $y$ encodes the conditions \eqref{Eq:InterpConds}. For ReLU neural networks, such a $u_\theta$ exists by the interpolation construction of \cref{Lem:ReLU:InterpolationNN} (proved in \cref{Sec:ReLUInterpolation}), while for smooth activation functions, the existence follows from \cref{Lem:Smooth:InterpolationNN} (proved in \cref{Sec:ReLUInterpolation}). In either case, \eqref{Eq:InterpConds} implies $\mcJ_{\reg}^h(u_\theta)=0$. Since $\mcJ_{\reg}^h\ge 0$, it follows that $u_\theta$ is a minimizer and $\arg\min_{u_\theta\in H} \mcJ_{\reg}^h(u_\theta)\neq\emptyset$.

\emph{Infinitely many minimizers.}
We next construct a nontrivial $\Phi\in H$ such that
\begin{equation*}
\mathcal M_{\reg}(\Phi)=0.
\end{equation*}
For ReLU neural networks, this is provided by \cite[Lemma~A.8]{Langer:26}; for smooth activation functions, by \cite[Lemma~A.4]{Langer:26}. In particular, $\Phi\not\equiv 0$ on $\Omega$ while $D^\beta\Phi(x)=0$ for all $x\in\Omega^h$ and $1\le |\beta|\le m$. Since $H$ is closed under linear combinations, \cref{lem:finite_measurement_nonuniqueness} yields
\[
\mcJ_{\reg}^h(u_\theta+\lambda\Phi)=\mcJ_{\reg}^h(u_\theta)=0 \qquad \text{for all }\lambda\in\mathbb R,
\]
and therefore $u_\theta+\lambda\Phi \in H$ is a minimizer for every $\lambda\in\mathbb R$. Thus, the set of minimizers is infinite.
\end{proof}

\begin{remark}[Distance to true solution]
It is clear that if a solution $u_\theta$ with $\mcJ_{\reg}^h(u_\theta)=0$ exists, then no regularization has been performed, as the regularization term is just vanishing. Moreover, \cref{Thm:Regularization:Existence:NonUniqueness} shows that the optimization problem \eqref{Eq:Regularization:QuadratureLoss} is ill-posed, yielding infinitely many solutions. In fact, a solution can be arbitrarily far away from a solution of the continuous problem \eqref{Eq:Regularization:ContProblem}. Let $\mcA=W^{m,p}(\Omega)$, $u^*\in W^{m,p}(\Omega)$ be a solution of \eqref{Eq:Regularization:ContProblem} and $u_\theta\in\mcHreg{m}(\Omega^h) \subseteq \mcA$ be a solution of \eqref{Eq:Regularization:QuadratureLoss}, then we observe
\[
\|u_\theta + \lambda \Phi - u^*\|_{L^p(\Omega)} \geq |\lambda|\|\Phi\|_{L^p(\Omega)} - \|u_\theta - u^*\|_{L^p(\Omega)} \longrightarrow \infty \qquad \text{for } |\lambda|\to \infty.
\]
\end{remark}

The same existence and non-uniqueness phenomenon stated in \cref{Thm:Regularization:Existence:NonUniqueness} persists for width-limited architectures, provided the network class is sufficiently expressive. In the case of smooth activations and depth \(L=2\), the required network size can be made explicit. By \cref{Thm:Regularization:Existence:NonUniqueness} and \cref{Lem:Smooth:InterpolationNN}, a minimizer of \(\mcJ^h_{\reg}\) can be realized by a depth-2 network \(u_\theta\in\mcHreg{m}^{\tilde M}(\Omega^h)\) with \(\ell=|\Omega^h|(m+1)\) hidden units, corresponding to \(\tilde M=(d+1)\ell+(\ell+1)\) parameters, and satisfying \(\mcJ^h_{\reg}(u_\theta)=0\). The null perturbation \(\Phi\) used in the proof of \cref{Thm:Regularization:Existence:NonUniqueness} admits a depth-2 representation with \(\ell+1\) hidden units, cf.\ \cite[Remark~A.6]{Langer:26}. Hence, \(u_\theta+\lambda\Phi\) can be represented by a depth-2 network with \(d_1=2\ell+1\) hidden units, corresponding to \(M=(d+1)d_1+(d_1+1)\) parameters.
Therefore, the non-uniqueness asserted in \cref{Thm:Regularization:Existence:NonUniqueness} already occurs within \(\mcHreg{m}^M(\Omega^h)\) for \(M\ge (d+1)(2\ell+1)+(2\ell+2)\).

Moreover, as in the Deep Ritz setting, the ill-posedness identified here is not tied to the existence of minimizers and the degeneracy extends to all loss sublevel sets, cf.\ \cref{Rem:IllPosedBeyondMinimizers}.

As already illustrated in \cite{LangerBehnamian:24} by an example for total variation minimization, and as further substantiated by \cref{Thm:Regularization:Existence:NonUniqueness}, the direct use of pointwise derivatives, for example, obtained via automated differentiation, is problematic for regularization problems of the form \eqref{Eq:Regularization:ContProblem}. 
The underlying reason is that pointwise derivatives are evaluated at individual locations, while the regularization functional $\mcR$ is approximated by numerical quadrature and therefore probed only at finitely many points. 
In general, this prevents a consistent approximation of $\mcR$.

If a neural network discretization is employed, one possible way to address this issue is to discretize the differential operators appearing in \(\mcR\) as well, so that the regularization term is evaluated through finite-dimensional stencil values rather than isolated pointwise derivatives. 
This approach was advocated in \cite{LangerBehnamian:24} for total variation minimization, where finite difference approximations were used.
In a finite difference discretization, the derivative operators $D^\beta$ appearing in $R$ are replaced by finite difference approximations $D_h^\beta$,
for $1\le |\beta|\le m$, leading to \(R\left(\left(D_h^\beta u(x)\right)_{1\leq|\beta|\leq m}\right).\) Accordingly, the finite difference analogue of \eqref{Eq:Regularization:QuadratureLoss} reads as

\begin{equation}\label{Eq:Regularization:FullyDiscrete}
\argmin_{u_\theta\in\mcH} \left\{ \mcJ_{\reg}^{h,\theta}(u_\theta):= \sum_{\tau=1}^2 \alpha_\tau \sum_{x\in\Omega^h} \omega_\mcD^x |u_\theta(x) - g^h(x)|^\tau + \mcR_\theta^h(u_\theta) \right\},
\end{equation}
where $\mcR_\theta^h(u_\theta) = \sum_{x\in\Omega^h} \omega_\mcR^x R \left(\left(D_h^\beta u_\theta(x)\right)_{1\leq |\beta|\leq m} \right )$. 
In contrast to \eqref{Eq:Regularization:QuadratureLoss}, the finite-difference formulation only requires pointwise values of \(u_\theta\) on the grid or stencil, and therefore does not require pointwise differentiability up to order \(m\). Moreover, the existence of solutions of \eqref{Eq:Regularization:FullyDiscrete} is not automatically guaranteed and further standard assumptions, like coercivity, boundedness from below and lower semicontinuity of $\mcJ_{\reg}^{h,\theta}$ are required to guarantee minimizers. Nevertheless, if \eqref{Eq:Regularization:FullyDiscrete} has a solution, then it has infinitely many as the following statement shows.

\begin{theorem}[Non-uniqueness of minimizers]\label{Thm:Regularization:FullyDiscrete:Nonunique}
Consider a nonempty finite collocation set $\Omega^h \subset{\Omega}$, let $L\ge 2$ and let $H \subseteq \mcH$ denote the set of depth-$L$ neural networks with either
\begin{enumerate}[(i)]
\item ReLU activation functions, or
\item non-polynomial activation functions $\sigma\in C(\mathbb R)$ that are strictly monotone if $L>2$, or
\item activation functions $\sigma \in C^{\ell}(\R)$, $\ell:= |\Omega^h|$, satisfying $D^k \sigma(s) \neq 0$ for some $s\in\mathbb{R}$ and all $0 \le k \le \ell$ that are strictly monotone if $L>2$.
\end{enumerate}
If $\argmin_{u_\theta \in H} \mcJ_{\reg}^{h,\theta}(u_\theta)$ has a solution, then it has infinitely many solutions in $\mcH$.
\end{theorem}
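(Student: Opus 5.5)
The plan is to apply \cref{lem:finite_measurement_nonuniqueness} once more, now with a measurement map that involves only point values. The finite-difference loss $\mcJ_{\reg}^{h,\theta}$ depends on $u_\theta$ only through its values on a finite stencil set. Let $S^h\subset\R^d$ be the union of $\Omega^h$ and all stencil points $x+h e$ that enter some $D_h^\beta u_\theta(x)$ with $x\in\Omega^h$. Since $\Omega^h$ is finite and each $D_h^\beta$ has finite support, $S^h$ is finite. Each $D_h^\beta u_\theta(x)$ is a fixed linear combination of the values $(u_\theta(y))_{y\in S^h}$. We therefore set
\[
\mathcal M_{\FD}(u) := \bigl(u(y)\bigr)_{y\in S^h}\in\R^{|S^h|}.
\]
This map is linear, and $\mcJ_{\reg}^{h,\theta}=G\circ\mathcal M_{\FD}$ for a suitable $G$, where $G$ combines the data terms with $R$ applied to the stencil combinations. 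Because $H$ is width-unlimited, it is closed under finite linear combinations. Once we have a nonzero $\Phi\in H$ with $\Phi(y)=0$ for all $y\in S^h$, the lemma shows that $u_\theta+\lambda\Phi$ is a minimizer for every $\lambda\in\R$. These minimizers are pairwise distinct as functions, because $\Phi\not\equiv 0$ on $\Omega$. That gives infinitely many minimizers.

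The only remaining task is the construction of the null function $\Phi$: a nonzero network in $H$ that vanishes on the finite set $S^h$ and is not identically zero on $\Omega$. I would pick $z_0\in\Omega\setminus S^h$, which is possible since $\Omega$ is open and $S^h$ finite, and build $\Phi$ with $\Phi(y)=0$ for $y\in S^h$ and $\Phi(z_0)=1$. This is pure Lagrange interpolation, with no derivatives involved, which explains why the hypotheses are weaker than in \cref{Thm:Regularization:Existence:NonUniqueness}.

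\begin{enumerate}[(i)]
\item \emph{ReLU networks.} I would choose a direction $a\in\R^d$ such that the projections $a\cdot y$, for $y\in S^h\cup\{z_0\}$, are pairwise distinct; this holds for almost every $a$. A one-dimensional continuous piecewise linear hat function $\psi$ of the variable $t=a\cdot x$ can then take the value $1$ at $a\cdot z_0$ and vanish at all other projected points. Such a $\psi$ is a finite combination of $\sigma(t-t_j)$, so $\Phi(x)=\psi(a\cdot x)$ is a depth-2 ReLU network. For $L>2$ I would pad with identity layers using $t=\sigma(t)-\sigma(-t)$, which ReLU networks realize exactly at any depth.
\item \emph{Continuous non-polynomial activations.} The same projection reduces the problem to one dimension. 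There I invoke the classical fact that shallow networks with a non-polynomial continuous $\sigma$ can interpolate arbitrary data at $n$ distinct points: the functions $\sigma(w t+b)$ span a space whose point evaluations at $n$ distinct points are onto $\R^n$, by the Leshno--Lin--Pinkus--Schocken density argument. For $L>2$, strict monotonicity lets the deeper layers act injectively, so the points remain distinct at the last hidden layer and the depth-2 interpolation can be applied there. This is the argument pattern of \cite[Lemma A.4]{Langer:26}.
\item \emph{$C^\ell$ activations with $D^k\sigma(s)\neq 0$.} I would apply \cite[Lemma A.4]{Langer:26} directly, taking derivative order zero, i.e.\ measuring only point values on the relevant finite set.
\end{enumerate}

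I expect the main obstacle to be a cardinality issue in case (iii). The hypothesis $\ell=|\Omega^h|$ counts collocation points, yet the null function must vanish on the whole stencil set $S^h$, which is larger. The derivative-count $\ell$ in the cited lemma has to match the number of interpolation conditions, so I would check carefully how the stencil points are absorbed. Either the stencil is assumed to lie in $\Omega^h$, which the statement implicitly suggests, or the count must be adjusted to $|S^h|$. Failing both, I would fall back on case (ii), since any such $\sigma$ is non-polynomial: a polynomial cannot have $D^k\sigma(s)\neq0$ for all $0\le k\le\ell$ with $\ell$ exceeding its degree. That fallback only works if $\ell$ exceeds the degree, which is why I would try to settle the count first. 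Handling depth $L>2$ via monotone injectivity in (ii) is the other point that needs care.
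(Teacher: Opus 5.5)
Your proposal is correct and follows essentially the same route as the paper: write $\mcJ_{\reg}^{h,\theta}$ as $G$ composed with a linear point-evaluation map, construct a nonzero network vanishing on the sampled points (the paper cites \cite[Theorem 5.1]{Pinkus:95} for (i)--(ii) and \cite[Lemma A.4]{Langer:26} for (iii), with injective width-one or identity layers for $L>2$), and conclude via \cref{lem:finite_measurement_nonuniqueness}. On your cardinality concern in (iii): the paper takes the measurement map to be $(u_\theta(x))_{x\in\Omega^h}$, i.e.\ it implicitly assumes the finite-difference stencil lies in $\Omega^h$, which is exactly what makes $\ell=|\Omega^h|$ the right count, and you are right that a stencil reaching outside $\Omega^h$ would force the count up to $|S^h|$.
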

\begin{proof}
The functional $\mcJ_{\reg}^{h,\theta}$ depends only on finitely many pointwise values of $u_\theta$ on the grid $\Omega^h$. Hence, $\mcJ_{\reg}^{h,\theta}$ can be written in the form $\mcJ_{\reg}^{h,\theta}(u_\theta) = G(\mathcal M_\theta(u_\theta))$, where the linear map $\mathcal M_\theta: \mcHreg{0}(\Omega^h) \to \R^{|\Omega^h|}$ is given by
\[
\mathcal M_\theta (u_\theta) = (u_\theta(x))_{x\in\Omega^h}.
\]
Note that the ReLU activation is in $C(\R)$ and hence all considered neural networks here are in $\mcHreg{0}(\Omega^h)$.
By the finite-measurement representation above, $\mcJ_{\reg}^{h,\theta}$ is a finite-measurement loss in the sense of \cref{Sec:FiniteLoss}. We construct a $\Phi\in\mcH$ such that $\mathcal M_\theta (\Phi)=0$ and $\Phi\not\equiv 0$ on $\Omega$. Such a depth-2 neural network can be achieved by utilizing \cite[Theorem 5.1]{Pinkus:95} (for (i) and (ii)) and \cite[Lemma A.4]{Langer:26} (for (iii)). To obtain a depth-$L$ neural network $\Phi$ with $L>2$ we add layers that do not change the interpolation conditions and keep a non-zero value in $\Omega\setminus\Omega^h$, such that $\mathcal M_\theta (\Phi)=0$ and $\Phi\not\equiv 0$ on $\Omega$. This can be realized as in the proofs of \cite[Lemma A.4 \& Lemma A.8]{Langer:26}.

Let $\hat{u}_\theta$ be a solution of $\argmin_{u_\theta \in H} \mcJ_{\reg}^{h,\theta}(u_\theta)$, then \cref{lem:finite_measurement_nonuniqueness} yields  $\mcJ_{\reg}^{h,\theta}(\hat{u}_\theta + \lambda \Phi) = \mcJ_{\reg}^{h,\theta}(\hat{u}_\theta)$ for any $\lambda\in\R$, yielding infinitely many solutions of $\argmin_{u_\theta\in H} \mcJ_{\reg}^{h,\theta}(u_\theta)$ in $H$.
\end{proof}

In \cite{LangerBehnamian:24} it is shown that any minimizer $u_\theta \in \mcH$ of \eqref{Eq:Regularization:FullyDiscrete} is closely connected to a minimizer $u^h\in\R^{|\Omega^h|}$ of 
\begin{equation}\label{Eq:Reg:FDM}
\argmin_{u^h\in\R^{|\Omega^h|}} \left\{ \mcJ_{\reg}^{\FD}(u^h):= \sum_{\tau=1}^2 \alpha_\tau \sum_{x\in\Omega^h}|u^h(x) -g^h(x)|^\tau + \mcR_{\FD}^h(u^h) \right\},
\end{equation}
which represents a finite difference discretization of \eqref{Eq:Regularization:ContProblem} where 
\[
\mcR_{\FD}^h(u^h) = \sum_{x\in\Omega^h} \omega_\mcR^x R \left(\left(D_h^\beta u^h(x)\right)_{1\leq |\beta|\leq m} \right )
\] 
and $u^h(x)\in\R$ denotes the value of $u^h\in\R^{|\Omega^h|}$ at the grid point $x\in\Omega^h$. More precisely, if \eqref{Eq:Reg:FDM} has a solution $u^h\in\R^{|\Omega^h|}$, then there exists $u_\theta\in\mcH$ solving \eqref{Eq:Regularization:FullyDiscrete} such that $u_\theta$ and $u^h$ coincide on the grid $\Omega^h$, i.e., $u_\theta(x) = u^h(x)$ for all $x\in\Omega^h$ \cite[Theorem 4.9]{LangerBehnamian:24}.

\begin{remark}[Distance to the true solution]
Let $H$ as in \cref{Thm:Regularization:FullyDiscrete:Nonunique}, $\hat{u}_\theta \in \argmin_{u_\theta\in H} \mcJ_{\reg}^{h,\theta}(u_\theta)$ and $\hat{u}^h\in\argmin_{u^h\in\R^{|\Omega^h|}} \mcJ_{\reg}^{\FD}(u^h)$ such that $\hat{u}_\theta(x) = \hat{u}^h(x)$ for all $x\in\Omega^h$. In the proof of \cref{Thm:Regularization:FullyDiscrete:Nonunique} we construct a nontrivial $\Phi\in H$ that vanishes on $\Omega^h$ such that $\hat u_\theta + \lambda\Phi\in H$ is a minimizer of $\mcJ_{\reg}^{h,\theta}$ for every $\lambda\in\R$. 
Let $u^*$ denote a solution of \eqref{Eq:Regularization:ContProblem} restricted (or sampled) to $\Omega^h$. Then we obtain 
\[
\sum_{x\in\Omega^h} |\hat u_\theta(x)+\lambda\Phi(x)-u^*(x)|
= \sum_{x\in\Omega^h} |\hat u_\theta(x)-u^*(x)| 
= \sum_{x\in\Omega^h} |\hat{u}^h(x)-u^*(x)| \qquad\text{for all }\lambda\in\R.
\]
Thus, while \eqref{Eq:Regularization:FullyDiscrete} exhibits the same non-uniqueness in $H$ as \eqref{Eq:Regularization:QuadratureLoss}, this non-uniqueness does not alter the discrete finite-difference solution on the stencil.
\end{remark}

As in the Deep Ritz method, here neural networks may equivalently be taken as globally defined functions $f_\theta:\mathbb R^d \to \mathbb R$, since the neural network discrete loss functionals only probe their values on $\Omega^h$; see \cref{Rem:Omega}.

\section{Weak PINN}\label{Sec:wPINN}

We consider an abstract wPINN discretization of a variational problem. Let $U$ be a Hilbert space and let $V_h \subset U$ be a finite-dimensional test space with $\dim(V_h)=n<\infty$. The admissible trial space is denoted by $\mcA \subset U$ (e.g., incorporating essential boundary conditions), and the admissible neural trial class by \(\mcH\subseteq \mcA\).
Let $a(\cdot,\cdot): U \times V_h \to \mathbb{R}$ be a bilinear form and $F: V_h \to \mathbb{R}$ a linear functional.
The wPINN problem reads:
\begin{equation}\label{eq:wPINN}
\text{find } u \in \mcH \text{ such that } a(u,v_h) = F(v_h) \quad \forall v_h \in V_h.
\end{equation}
While the potential ill-posedness of such wPINN formulations has been anticipated in previous work, including \cite{BeCaPi2022}, a rigorous, architecture-independent proof at the level of the exact weak formulation has so far been missing, to the best of our knowledge. The results below provide such a proof.

\subsection{Non-Uniqueness of Solutions}

For the non-uniqueness argument, we introduce a homogeneous neural perturbation class \(\mcH_0\subset U\). We assume that \(\mcH_0\) is closed under finite linear combinations, contains at least \(n+1\) linearly independent functions, and preserves admissibility of the neural trial class in the sense that
\[
u\in\mcH,\ \Phi\in\mcH_0 \quad\Longrightarrow\quad u+\Phi\in\mcH .
\]

We begin with the homogeneous case $F\equiv 0$, corresponding to a zero right-hand side in the weak formulation.

\begin{theorem}[Non-uniqueness of homogeneous case]
\label{Thm:wPINN:HomNonuniq}
The homogeneous problem
\begin{equation}\label{eq:wPINN_hom}
\text{find } u\in\mcH_0 \text{ such that } a(u,v_h)=0 \quad \forall v_h\in V_h
\end{equation}
admits nontrivial solutions. In particular, there exists $\Phi \in \mcH_0 \setminus \{0\}$ such that
\[
a(\Phi,v_h) = 0 \quad \forall v_h \in V_h.
\]
Consequently, since \(0\in\mcH_0\), the homogeneous problem is not unique in \(\mcH_0\).
\end{theorem}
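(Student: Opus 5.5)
The argument is linear algebra: it combines the assumption that $\mcH_0$ contains at least $n+1$ linearly independent functions with the finite dimension $n$ of the test space $V_h$. Fix a basis $\{v_1,\dots,v_n\}$ of $V_h$. Since $a(\cdot,v_h)$ is linear in its first argument and linear in $v_h$, the condition $a(u,v_h)=0$ for all $v_h\in V_h$ is equivalent to the $n$ scalar conditions $a(u,v_j)=0$, $j=1,\dots,n$. We therefore introduce the linear finite-measurement map
\[
\mathcal M_{\mathrm{w}}:\mcH_0\to\R^n,\qquad \mathcal M_{\mathrm{w}}(u):=\bigl(a(u,v_1),\dots,a(u,v_n)\bigr),
\]
which fits the framework of \cref{Sec:FiniteLoss}. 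The statement is then precisely that $\mathcal M_{\mathrm{w}}$ has a nontrivial kernel in $\mcH_0$.

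To prove this, pick linearly independent $\phi_1,\dots,\phi_{n+1}\in\mcH_0$ and consider the $n\times(n+1)$ matrix $A$ with entries $A_{jk}:=a(\phi_k,v_j)$. A linear system with more unknowns than equations has a nonzero solution, so there is $c=(c_1,\dots,c_{n+1})\in\R^{n+1}\setminus\{0\}$ with $Ac=0$. Set
\[
\Phi:=\sum_{k=1}^{n+1}c_k\phi_k .
\]
Since $\mcH_0$ is closed under finite linear combinations, $\Phi\in\mcH_0$. Linearity of $a$ in its first argument gives $a(\Phi,v_j)=(Ac)_j=0$ for every $j$, and hence $a(\Phi,v_h)=0$ for all $v_h\in V_h$ by linearity in the second argument. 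Because the $\phi_k$ are linearly independent and $c\neq0$, we have $\Phi\neq0$.

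Finally, $0\in\mcH_0$, either as the trivial linear combination or as $0\cdot\phi_1$, and $0$ trivially solves \eqref{eq:wPINN_hom}. Thus $0$ and $\Phi$ are two distinct solutions. In fact every $\lambda\Phi$, $\lambda\in\R$, is a solution, which mirrors \cref{lem:finite_measurement_nonuniqueness} with $G\equiv$ const on the zero set.

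The only point needing care is which notion of linearity is used. Linear independence of the $\phi_k$ is taken in the vector space $U$, and this is what guarantees $\Phi\neq0$ as an element of $U$, not merely as a formal coefficient vector. Beyond that there is no genuine obstacle: the whole difficulty has been front-loaded into the hypothesis that $\mcH_0$ is rich enough. No neural-network construction such as \cite[Lemma A.4]{Langer:26} is needed here, since the $n+1$ independent elements are assumed.
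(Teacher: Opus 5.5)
Your proof is correct and matches the paper's argument: the paper also fixes a basis of \(V_h\), forms the linear map \(T(u)=(a(u,\phi_1),\dots,a(u,\phi_n))\), uses the linear dependence in \(\R^n\) of the images of \(n+1\) linearly independent elements of \(\mcH_0\), and concludes via closure of \(\mcH_0\) under linear combinations and bilinearity of \(a\). Your matrix formulation \(Ac=0\) is the same linear-dependence step written in coordinates.
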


\begin{proof}
Let $\{\phi_i\}_{i=1}^n$ be a basis of $V_h$ and define the linear operator
\[
T:\mcH_0 \to \mathbb R^n, \qquad T(u):=\bigl(a(u,\phi_1),\dots,a(u,\phi_n)\bigr).
\]
By assumption on \(\mcH_0\), there exist linearly independent functions \(u_0,\dots,u_n\in\mcH_0\). Then the $n+1$ vectors $T(u_0),\dots,T(u_n)\in\mathbb R^n$ are linearly dependent, so there exist coefficients $\lambda_0,\dots,\lambda_n\in\R$, not all zero, such that
\[
\sum_{j=0}^n \lambda_j\, T(u_j)=0.
\]
Since $\mcH_0$ is closed under finite linear combinations, $\Phi:=\sum_{j=0}^n \lambda_j u_j \in \mcH_0$. Then $T(\Phi)=0$, i.e., $a(\Phi,\phi_i)=0$ for all $i=1,\dots,n$. Since \(\{\phi_i\}_{i=1}^n\) is a basis of \(V_h\) and \(a(\Phi,\cdot)\) is linear, this implies \(a(\Phi,v_h)=0\) for all \(v_h\in V_h\). Moreover $\Phi\not\equiv 0$ since $u_0,\dots,u_n$ are linearly independent and not all $\lambda_j$, $j=0,\ldots,n$, vanish.
\end{proof}

\begin{corollary}[Non-uniqueness of inhomogeneous problem]
\label{Cor:wPINN:InhomNonuniq}
If problem \eqref{eq:wPINN} admits at least one solution $u^*\in\mathcal H$, then it admits infinitely many solutions in $\mcH$. More precisely, $u^* + \lambda \Phi \in \mcH$ is a solution for every $\lambda\in\R$, where $\Phi\in\mcH_0\setminus\{0\}$ solves \eqref{eq:wPINN_hom}.
\end{corollary}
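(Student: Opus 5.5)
The argument combines \cref{Thm:wPINN:HomNonuniq} with the bilinearity of $a$, in the spirit of \cref{lem:finite_measurement_nonuniqueness}. Throughout, let $u^*\in\mcH$ be a solution of \eqref{eq:wPINN}, and let $\Phi\in\mcH_0\setminus\{0\}$ be the nontrivial homogeneous solution supplied by \cref{Thm:wPINN:HomNonuniq}, so that $a(\Phi,v_h)=0$ for all $v_h\in V_h$.

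\emph{Admissibility.} We first check that $u^*+\lambda\Phi\in\mcH$ for every $\lambda\in\R$. Since $\mcH_0$ is closed under finite linear combinations, $\lambda\Phi\in\mcH_0$. The admissibility-preservation assumption ($u\in\mcH$, $\Phi\in\mcH_0\Rightarrow u+\Phi\in\mcH$) then gives $u^*+\lambda\Phi\in\mcH$.

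\emph{Solution property.} Next we verify that each $u^*+\lambda\Phi$ solves \eqref{eq:wPINN}. By linearity of $a$ in its first argument, for all $v_h\in V_h$,
\[
a(u^*+\lambda\Phi,v_h)=a(u^*,v_h)+\lambda\, a(\Phi,v_h)=F(v_h)+0=F(v_h).
\]
The first argument of $a$ lives in $U$, and $\mcH,\mcH_0\subset U$, so $a$ is defined on all the functions involved.

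\emph{Distinctness.} Finally, the solutions are pairwise distinct. If $\lambda\neq\mu$, then $(u^*+\lambda\Phi)-(u^*+\mu\Phi)=(\lambda-\mu)\Phi\neq 0$, because $\Phi\neq 0$ in $U$. Hence the map $\lambda\mapsto u^*+\lambda\Phi$ is injective, and the solution set is infinite.

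There is no substantive obstacle here. The only point needing care is that admissibility must come from the stated assumption on $\mcH_0$: $\mcH$ itself need not be a linear space, since it may encode inhomogeneous essential boundary conditions.
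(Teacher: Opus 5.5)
Your proposal is correct and follows the paper's proof: you take $\Phi$ from \cref{Thm:wPINN:HomNonuniq}, use the admissibility-preservation assumption to get $u^*+\lambda\Phi\in\mcH$, and use linearity of $a$ in its first argument to show that each $u^*+\lambda\Phi$ solves the problem. You also make explicit two steps the paper leaves implicit: that $\lambda\Phi\in\mcH_0$ follows from closure of $\mcH_0$ under linear combinations, and that the solutions $u^*+\lambda\Phi$ are pairwise distinct because $\Phi\neq 0$.
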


\begin{proof}
Let $\Phi\in\mcH_0\setminus\{0\}$ be as in \Cref{Thm:wPINN:HomNonuniq}. By the admissibility-preservation assumption on \(\mcH_0\), we have \(u^*+\lambda\Phi\in\mcH\) for all \(\lambda\in\R\). Then for all $v_h\in V_h$, 
\[
a(u^*+\lambda \Phi,v_h)=a(u^*,v_h) + \lambda a(\Phi,v_h)=F(v_h)+0=F(v_h),
\]
so $u^*+ \lambda \Phi$ is a solution for any $\lambda\in\R$.
\end{proof}

\Cref{Thm:wPINN:HomNonuniq} and \cref{Cor:wPINN:InhomNonuniq} show that non-uniqueness of the weak PINN formulation arises whenever the neural network architectures are not fixed and are allowed to increase in width, so that linear combinations of finitely many realizable networks remain realizable by a wider network.
Similar non-uniqueness and instability phenomena in weak neural discretizations have also been observed in the numerical analysis literature. In particular, in \cite{BeCaPi2022} it is demonstrated that, for wPINN formulations of elliptic problems, spurious zero-residual solutions may exist and that stability requires a suitable inf-sup condition between trial and test spaces. Their analysis motivates stabilization strategies based on projection or interpolation of the neural trial functions, which aligns with the abstract non-uniqueness mechanism identified in \Cref{Thm:wPINN:HomNonuniq}. 
More precisely, in the wPINN framework of \cite{BeCaPi2022}, the differential operators appearing in the weak formulation are not applied to the neural network function itself. Instead, the neural network is first projected (or interpolated) into a finite element space, and all spatial derivatives are taken with respect to this polynomial finite element representation, thereby avoiding the use of pointwise neural-network derivatives (e.g., via automatic differentiation) at the PDE level.

Importantly, the non-uniqueness in the wPINN framework is a property of the discrete weak formulation and is independent of the well-posedness of the underlying continuous variational problem:
\begin{equation*}
\text{find } u \in \mcA \text{ such that } a(u,v) = F(v) \quad \forall v \in U.
\end{equation*}
We emphasize that the non-uniqueness of the wPINN formulation arises already at the level of the exact variational formulation and is not a consequence of numerical quadrature, in contrast to the situation for PINNs, cf.\ \cite{Langer:26}.

\begin{remark}[Numerical integration in wPINNs]
Although the wPINN formulation \eqref{eq:wPINN} is posed in terms of weak derivatives and exact integrals, any practical implementation necessarily requires numerical quadrature to approximate the bilinear form $a(\cdot,\cdot)$ and the functional $F$. As a consequence, the resulting quadrature-based formulation depends only on finitely many pointwise evaluations of the neural network and, depending on the problem, its classical derivatives at quadrature points.

Thus, while the continuous weak formulation already admits non-unique solutions due to the finite-dimensionality of the test space $V_h$, numerical integration additionally converts the problem into a finite-information restricted system of the type discussed in \cref{Sec:FiniteLoss}. 
In particular, quadrature does not restore uniqueness; if anything, it may introduce further null directions corresponding to functions that vanish at all quadrature points while remaining nonzero elsewhere, analogous to those observed for Deep Ritz methods and neural network discretizations of variational regularization.
\end{remark}

\section{Conclusion}\label{Sec:Conclusion}

In this work, we showed that several neural network-based discretizations of variational problems, including the Deep Ritz method, variational regularization approaches, and wPINN formulations, can be ill-posed at the discrete level even when the corresponding continuous problems are well-posed. In each case, the discrete formulation admits non-uniqueness or loss of control that is not reflected in the underlying variational model.

A common structural reason for this behavior is the reliance on finite-information discretizations together with pointwise measurements. When losses or formulations depend only on finitely many measurements -- through quadrature, collocation, or a finite-dimensional test space -- the resulting discrete problem cannot uniquely determine global behavior. 
This issue becomes particularly pronounced when derivatives are evaluated only pointwise, as is typically the case when using automatic differentiation. When such evaluations are restricted to finitely many locations, they do not provide sufficient information to control the behavior of the function over the domain.

In energy-based formulations and variational regularization methods, this mismatch implies that the discrete loss can admit infinitely many distinct minimizers that are indistinguishable by the finite measurements entering the loss. In particular, one can construct families of minimizers that agree on the sampled points but differ elsewhere in the domain. In derivative-only formulations, the degeneracy may be even larger because the point values themselves need not be controlled.
In regularization problems, this can lead to infinitely many zero-loss solutions, so that the intended regularizing effect is not realized at the discrete level. PINNs based on pointwise derivatives exhibit the same structural difficulty, as shown in \cite{Langer:26}, where ill-posedness arises from pointwise enforcement of continuous differential operators combined with finite-information discretizations. In wPINN formulations, a related loss of uniqueness arises already from the finite dimensionality of the test space, without requiring any optimization, and numerical quadrature does not remove this ambiguity.

When derivatives are approximated by finite differences, ill-posedness in the neural ansatz space persists, but all neural minimizers coincide on the finite-difference stencil with a classical finite-difference solution. At the level where information is prescribed, the discrete problem therefore recovers the behavior of a standard discretization, in contrast to formulations based on pointwise derivatives.

Taken together, these results indicate that the observed difficulties are not caused by neural networks themselves, but by the use of pointwise differentiation in discrete settings that encode only finite information. While pointwise derivatives are readily available in modern implementations, their use calls for caution and may need to be aligned with a discretization that meaningfully constrains the resulting expressions. Without such alignment, discrete neural variational methods may fail to reflect the behavior of the underlying continuous problem, despite small or vanishing discrete loss values.

\appendix
\section{Interpolation Lemmas for Networks}\label{Sec:ReLUInterpolation}

Here we present auxiliary results needed to prove \cref{Thm:Regularization:Existence:NonUniqueness}.
\begin{lemma}\label{Lem:ReLU:InterpolationNN}
Let $\Omega^h :=\{x_i\}_{i=1}^N \subset \Omega \subset \R^d$ be a finite set of pairwise distinct points, and $g^h: \Omega^h \to \R$. For any $L \ge \lceil \log_2(d+1)  \rceil + 1$ there exists a ReLU neural network $\Phi:\mathbb{R}^d\to\mathbb{R}$ of depth $L$ such that 
\[
\Phi(x) = g^h(x) \quad x\in \Omega^h \quad\text{and}\quad D^\beta \Phi(x)=0 \quad x\in\Omega^h, \ |\beta|> 0.
\]
\end{lemma}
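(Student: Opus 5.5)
Our plan is to build $\Phi$ as a finite linear combination of ReLU networks that are each locally constant near one point of $\Omega^h$ and supported away from the others. We use the fact that depth $L\ge\lceil\log_2(d+1)\rceil+1$ ReLU networks represent every continuous piecewise linear (CPWL) function on $\R^d$; this is the Arora et al.\ representation theorem, which underlies \cite[Lemma A.8]{Langer:26}. Since the width is unlimited, closure under linear combinations lets us sum single-point contributions.

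\emph{Step 1 (bump at a point).} Let $\delta:=\tfrac13\min_{i\neq j}|x_i-x_j|_\infty>0$. For each $i$ we define the CPWL function
\[
\psi_i(x):=\min\Bigl\{1,\ \max\bigl\{0,\ 2-\tfrac{1}{\delta}\,|x-x_i|_\infty\bigr\}\Bigr\}.
\]
It equals $1$ on the open cube $\{|x-x_i|_\infty<\delta\}$ and vanishes outside $\{|x-x_i|_\infty<2\delta\}$. Since $|\cdot|_\infty$ is a maximum of $2d$ affine functions, $\psi_i$ is built from finitely many max/min operations of affine maps and is therefore CPWL on $\R^d$. The representation theorem then gives a ReLU network of depth exactly $\lceil\log_2(d+1)\rceil+1$ realizing $\psi_i$. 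To reach any larger depth $L$, we append identity layers using $t=\sigma(t)-\sigma(-t)$; this preserves the function and affects only the widths.

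\emph{Step 2 (superposition).} We set $\Phi:=\sum_{i=1}^N g^h(x_i)\psi_i$. Because the supports of $\psi_j$ for $j\neq i$ lie at $\ell^\infty$-distance at least $3\delta-2\delta=\delta$ from $x_i$, the function $\Phi$ coincides with the constant $g^h(x_i)$ on the open cube of radius $\delta$ around $x_i$. Hence $\Phi$ is $C^\infty$ at each $x_i$, with $\Phi(x_i)=g^h(x_i)$ and $D^\beta\Phi(x_i)=0$ for all $|\beta|>0$. In particular $\Phi\in\mcHreg{m}(\Omega^h)$ for every $m$. Finally, $\Phi$ is a depth-$L$ network because the sum of depth-$L$ networks is realized by stacking them in parallel and summing in the linear output layer, as in \cite[Section~A.1]{Langer:26}.

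\emph{Main obstacle.} The delicate part is the depth bookkeeping: we must ensure that $\psi_i$, which involves a max of $2d$ affine terms nested inside a min and a max, indeed fits into depth $\lceil\log_2(d+1)\rceil+1$. We intend to invoke the general CPWL theorem, whose bound is precisely this depth, rather than count the nested maxima by hand. If one prefers to avoid it, an alternative is to take an explicit hat-type function, e.g.\ $1$ minus a scaled $\max$ over $d+1$ affine functions vanishing at a simplex around $x_i$, truncated at $0$ and $1$. That route would require verifying directly the nesting depth of binary maxima, where the threshold $\lceil\log_2(d+1)\rceil$ appears naturally.
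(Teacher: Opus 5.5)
Your proof is correct and takes essentially the same route as the paper: you build a continuous piecewise affine function that equals $g^h(x_i)$ on a neighborhood of each $x_i$ (so all derivatives vanish there), invoke the Arora et al.\ representation theorem at depth $\lceil\log_2(d+1)\rceil+1$, and pad to depth $L$ with the identity $t=\sigma(t)-\sigma(-t)$. The differences are minor: you use explicit $\ell^\infty$-bumps where the paper uses triangulated polyhedral bumps, and you realize each bump separately and sum them in parallel where the paper applies the theorem to $\Psi$ directly; for $N=1$ your $\delta$ is undefined, but any $\delta>0$ works.
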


\begin{proof}
Choose pairwise disjoint polyhedral open sets $\mcU_i\subset \Omega$ with $x_i \in \mcU_i$ and pick open polyhedral sets $\mcV_i$ such that $x_i\in \mcV_i \Subset \mcU_i$ for $i=1,\ldots,N$. For each $i\in\{1,\ldots,N\}$, choose a finite triangulation of the polyhedral set $\mcU_i \setminus \overline{\mcV}_i$. Define $\psi_i$ to be affine on each simplex on the triangulation, set $\psi_i\equiv 1$ on $\mcV_i$ and $\psi_i\equiv 0$ on $\R^d\setminus \mcU_i$, and choose the affine pieces so that $\psi_i$ extends continuously to $\R^d$.
This yields continuous and piecewise affine functions $\psi_i:\R^d\to\R$ for $i=1,\ldots,N$.
Then $\Psi (x) := \sum_{i=1}^N g^h(x_i) \psi_i(x)$ yields a continuous and piecewise affine function with $\Psi (x) = g^h(x_i)$ for $x\in \mcV_i$ and $i=1,\ldots,N$.

Since any continuous and piecewise affine function is representable by a ReLU neural network of depth $L_0 := \lceil \log_2(d+1)  \rceil + 1$ \cite[Theorem 2.1]{ArBaMiMu:16}, $\Psi$ is realized by a ReLU neural network.
Because $\Psi$ is constant on each $\mcV_i$, $i=1,\ldots,N$, all derivatives vanish at $x_i$.

To obtain a depth-$L$ neural network with $L\geq L_0$ we just insert layers that implement the identity mapping. This can be realized by $f(t):=\sigma(t)-\sigma(-t) = t$ for all $t\in\R$  and composing the final neural network as $\Phi = f \circ f \circ \cdots \circ f \circ \Psi$ by using $L-L_0$ $f$'s. This proves the statement. 
\end{proof}

\begin{lemma}\label{Lem:Smooth:InterpolationNN}
Let \(\Omega^h :=\{x_i\}_{i=1}^N \subset \Omega \subset \mathbb R^d\) be a finite set of pairwise distinct points, and let \(m\in\mathbb N\) and \(L\geq 2\). 
Set \(\ell:=N(m+1).\)
Assume that \(\sigma\in C^{\ell-1}(\mathbb R)\) and that there exists \(s\in\mathbb R\) such that
\[
D^k\sigma(s)\neq 0 \qquad \text{for all }0\le k\le \ell-1 .
\]
If \(L>2\), assume in addition that \(\sigma\) is strictly monotone. 
Let \(g^h:\Omega^h\to\mathbb R\) be prescribed. Then there exists a depth-\(L\) neural network \(u_\theta\) with hidden layer widths \(d_j=1\), \(j=1,\ldots,L-2\), and \(d_{L-1}=\ell,\) such that
\[
u_\theta(x_i)=g^h(x_i), \qquad D^\beta u_\theta(x_i)=0 \quad\text{for all }1\le |\beta|\le m,\ i=1,\ldots,N .
\]
\end{lemma}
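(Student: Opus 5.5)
We reduce the construction to a finite-dimensional linear interpolation problem for a depth-2 network and then pass to general depth by composition. The conditions $u_\theta(x_i)=g^h(x_i)$ and $D^\beta u_\theta(x_i)=0$ for $1\le|\beta|\le m$ constrain the Taylor data of $u_\theta$ up to order $m$ at $N$ distinct points. The key simplification is to use ridge functions $x\mapsto\sigma(w\,a^\top x+b)$ with one fixed direction $a\in\R^d$. We choose $a$ so that the projections $t_i:=a^\top x_i$ are pairwise distinct; such $a$ exists because only finitely many hyperplanes $\{a\colon a^\top(x_i-x_j)=0\}$ must be avoided. Then every $D^\beta$ of $\rho(a^\top x)$ equals $a^\beta\rho^{(|\beta|)}(a^\top x)$. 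It therefore suffices to find a univariate function $\rho(t)=\sum_{k=1}^{\ell}c_k\sigma(w_k t+b_k)+c_0$ with $\rho(t_i)=g^h(x_i)$ and $\rho^{(j)}(t_i)=0$ for $1\le j\le m$, $i=1,\dots,N$. This is a univariate Hermite interpolation problem with exactly $N(m+1)=\ell$ conditions.

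\textbf{Step 1 (univariate Hermite interpolation with $\ell$ neurons).} Write the Hermite conditions as a linear system $A c=y$ in the output weights $c\in\R^{\ell}$. We aim to show that for suitable inner parameters $(w_k,b_k)$ the $\ell\times\ell$ matrix
\[
A_{(i,j),k}=w_k^{\,j}\,\sigma^{(j)}(w_k t_i+b_k),\qquad i=1,\dots,N,\ 0\le j\le m,
\]
is nonsingular. The output bias $c_0$ can be set to zero. For this we use the standard Taylor argument as in \cite[Lemma~A.4]{Langer:26}. Choose $b_k=s$ and small $w_k=\varepsilon\tau_k$ with distinct $\tau_k$, and expand $\sigma(w t+s)=\sum_{r=0}^{\ell-1}\sigma^{(r)}(s)(wt)^r/r!+o(|w|^{\ell-1})$; this needs $\sigma\in C^{\ell-1}$. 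Suppose a functional annihilates all columns of $A$. Then it annihilates every monomial $t^r$, $r\le\ell-1$, because $\sigma^{(r)}(s)\neq0$ and the Vandermonde matrix in the $\tau_k$ is invertible. Hermite interpolation of polynomials of degree $\le\ell-1$ at the $N$ distinct nodes $t_i$ with $m+1$ data each is unisolvent, so the functional is zero. Hence $\det A\neq0$ for suitable $\varepsilon$, since the determinant is a nonzero multiple of $\varepsilon^{\text{power}}$ plus higher-order terms. An alternative route is the function $F(w_1,\dots,w_\ell)=\det A$, which is not identically zero by this argument, so generic parameters work. This step is the main obstacle: the expansion must be bookkept carefully so that the leading term of $\det A$ factors into a product of the nonzero $\sigma^{(r)}(s)$, a Vandermonde determinant in the $\tau_k$, and the confluent Vandermonde determinant in the $t_i$.

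\textbf{Step 2 (depth 2).} Setting $u_\theta(x):=\rho(a^\top x)$ gives a depth-2 network with hidden width $\ell$, first-layer weights $w_k a^\top$ and biases $b_k$. By the chain rule $D^\beta u_\theta(x_i)=a^\beta\rho^{(|\beta|)}(t_i)=0$, and $u_\theta(x_i)=g^h(x_i)$ by construction.

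\textbf{Step 3 (depth $L>2$).} We insert $L-2$ width-one layers in front: $z\mapsto\sigma(a^\top x+\tilde b)$ followed by further scalar maps $\sigma(\cdot)$. Set $\psi:=\sigma\circ\cdots\circ\sigma(a^\top\cdot+\tilde b)$. Because $\sigma$ is strictly monotone, the values $\tilde t_i:=\psi(x_i)$ remain pairwise distinct. Repeat Step 1 with nodes $\tilde t_i$ to obtain $\rho$ with $\rho(\tilde t_i)=g^h(x_i)$ and $\rho^{(j)}(\tilde t_i)=0$ for $1\le j\le m$. Then $u_\theta=\rho\circ\psi$ has hidden widths $1,\dots,1,\ell$. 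By Faà di Bruno's formula, every derivative of order between $1$ and $m$ of $\rho\circ\psi$ at $x_i$ is a combination of $\rho^{(j)}(\tilde t_i)$ with $1\le j\le m$, all of which vanish. The smoothness of $\psi$ needed here is only $C^m$, which holds since $m\le\ell-1$. This completes the construction.
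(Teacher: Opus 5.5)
Your proof is correct. For $L>2$ it coincides with the paper's argument: a direction with distinct projections, $L-2$ width-one $\sigma$-layers kept injective by strict monotonicity, a univariate Hermite interpolant with $\ell$ neurons at the new nodes, and Fa\`a di Bruno. It differs from the paper in two ways.

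First, the paper handles $L=2$ by applying \cite[Corollary~A.3]{Langer:26} directly to the $d$-dimensional data. You already use the ridge reduction $u_\theta=\rho(a^\top x)$ there, so both cases become one univariate problem; your Step~2 is just the paper's $L>2$ construction with zero width-one layers. This makes it transparent why $\ell=N(m+1)$ neurons suffice in every dimension $d$: the vanishing derivative data collapse all multivariate conditions to $m+1$ univariate ones per node. The paper leaves this count to the cited result.

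Second, you prove the univariate Hermite lemma instead of citing it. This makes the argument self-contained, at the cost of the bookkeeping you flag. That bookkeeping does close, with one caveat on phrasing. The annihilation argument as you wrote it is valid for the whole family $\{\sigma(w\,\cdot+s)\}_{|w|<\delta}$, not for $\ell$ fixed columns at a given $\varepsilon$. For the family it runs as follows. If a linear combination $\mu$ of the functionals $f\mapsto f^{(j)}(t_i)$ kills every member, then $\mu(\sigma(w\,\cdot+s))$ is a polynomial in $w$ of degree at most $\ell-1$ that is also $o(|w|^{\ell-1})$. Hence $\mu(t^r)=0$ for all $r\le \ell-1$, and $\mu=0$ by unisolvence. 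So the data vectors of the family span $\R^\ell$, and some $\ell$ members have linearly independent data.

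Your determinant route also works. Write $A=PS\,(D_\varepsilon V+o(\varepsilon^{\ell-1}))$, where $P$ is the scaled confluent Vandermonde matrix of the nodes, $S=\mathrm{diag}(\sigma^{(r)}(s))$ and $D_\varepsilon=\mathrm{diag}(\varepsilon^r)$. Pulling $\varepsilon^r$ out of row $r$ leaves $V+o(1)$, whose determinant tends to $\det V\neq 0$.
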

\begin{proof}
We distinguish two cases.

\medskip
\noindent\textbf{Case \(L=2\).}
In this case the statement follows directly from \cite[Corollary~A.3]{Langer:26} applied to the finite set \(\Omega^h=\{x_i\}_{i=1}^N\), with prescribed Hermite data
\[
u_\theta(x_i)=g^h(x_i), \qquad D^\beta u_\theta(x_i)=0 \quad\text{for }1\le |\beta|\le m .
\]
Indeed, with \(\ell=N(m+1)\), the assumptions on \(\sigma\) are precisely those required by \cite[Corollary~A.3]{Langer:26}. This gives a one-hidden-layer, hence depth-\(2\), neural network with \(d_1=\ell\) hidden units.

\medskip
\noindent\textbf{Case \(L>2\).}
First, choose \(v\in\mathbb R^d\) such that \(v\cdot x_i\neq v\cdot x_j\) for all \(i\neq j\). Indeed, for each pair \(i\neq j\), the set
\[
A_{ij}:=\left\{v\in\mathbb R^d:\ v\cdot(x_i-x_j)=0\right\}
\]
is a codimension-one hyperplane, and finitely many proper hyperplanes cannot cover \(\mathbb R^d\), cf.\ \cite[Remark A.5]{Langer:26}.

Define \(\varphi_0(x):=v \cdot x\) and for \(i=1,\ldots,L-2\) define recursively \(\varphi_i(x):=\sigma(\varphi_{i-1}(x)).\)
Set \(f:=\varphi_{L-2}.\)
Since \(\sigma\) is injective and \(\varphi_0(x_i)\neq \varphi_0(x_j)\) for \(i\neq j\), induction gives
\[
f(x_i)\neq f(x_j) \qquad \text{for all }i\neq j .
\]
Thus the scalar nodes \(t_i:=f(x_i),\ i=1,\ldots,N,\) are pairwise distinct.

We now apply \cite[Corollary~A.3]{Langer:26} in dimension one, i.e., $d=1$, to the nodes \(t_1,\ldots,t_N\). We prescribe
\[
\Phi(t_i)=g^h(x_i),\qquad \text{ and } \qquad \frac{d^k \Phi}{dt^k}(t_i)=0 \quad\text{for }1\le k\le m,\ i=1,\ldots,N .
\]
Since the interpolation is one-dimensional, i.e., $d=1$, and involves \(N\) nodes with derivatives up to order \(m\), we have \(\ell=N(m+1).\)
Thus \cite[Corollary~A.3]{Langer:26} applies under the assumptions
\[
\sigma\in C^{\ell-1}(\mathbb R), \qquad D^{(k)}\sigma(s)\neq 0
\]
for $0\le k\le \ell-1$ and some $s\in\R$. 
Therefore there exists a one-hidden-layer scalar neural network \(\Phi:\mathbb R\to\mathbb R\) with the above interpolation properties and $\ell$ hidden units.

Defining \(u_\theta(x):=\Phi(f(x)),\) we obtain for all $i=1,\ldots,N$ that
\[
u_\theta(x_i)=\Phi(f(x_i))=\Phi(t_i)=g^h(x_i).
\]
Moreover, for every multi-index \(\beta\) with \(1\le |\beta|\le m\), the Faà di Bruno formula gives \(D^\beta(\Phi\circ f)(x_i)\) as a finite sum of terms involving \(\frac{d^k \Phi}{dt^k}(f(x_i)),\ 1\le k\le |\beta|,\) multiplied by derivatives of \(f\) at \(x_i\) for all $i=1,\ldots,N$. Since \(f(x_i)=t_i\) and \(\frac{d^k\Phi}{dt^k}(t_i)=0\) for \(1\le k\le m,\) all such terms vanish. 
Hence \(D^\beta u_\theta(x_i)=0\) for all \(1\le |\beta|\le m\), \(i=1,\ldots,N.\)

Finally, \(u_\theta=\Phi\circ f\) is a depth-\(L\) neural network, since \(f\) occupies the first \(L-2\) hidden layers, each of width one, and \(\Phi\) uses the last hidden layer, of width $\ell$, together with the final linear output layer.
\end{proof}

\bibliographystyle{abbrv}
\bibliography{nmh, PINN_Ref}

\end{document}